\renewcommand{\a}{\alpha}
\renewcommand{\b}{\beta}
\newcommand{\g}{\gamma}
\newcommand{\G}{\Gamma}
\renewcommand{\d}{\delta}
\renewcommand{\l}{\lambda}
\newcommand{\m}{\mu}
\newcommand{\n}{\nu}
\renewcommand{\o}{\omega}
\renewcommand{\O}{\Omega}
\renewcommand{\r}{\rho}
\newcommand{\s}{\sigma}
\renewcommand{\t}{\tau}
\renewcommand{\th}{\theta}
\newcommand{\e}{\varepsilon}
\newcommand{\x}{\xi}
\newcommand{\z}{\zeta}
\newcommand{\p}{\psi}
\newcommand{\Bc}{{\mathcal B}}
\newcommand{\C}{{\mathbb C}}
\newcommand{\E}{{\mathbb E}}
\newcommand{\Hyp}{{\mathbb H}}
\newcommand{\N}{{\mathbb N}}
\renewcommand{\P}{\mathbb P}
\newcommand{\R}{{\mathbb R}}
\newcommand{\Z}{{\mathbb Z}}
\newcommand{\Pb}{{\mathbb P}}
\newcommand{\pP}{{\mathcal P}}
\newcommand{\cF}{{\mathcal F}}
\newtheorem{theorem}{Theorem}[section]
\newtheorem{proposition}[theorem]{Proposition}
\newtheorem{lemma}[theorem]{Lemma}
\newtheorem{corollary}[theorem]{Corollary}
\newtheorem{fact}[theorem]{Fact}
\theoremstyle{definition}
\theoremstyle{remark}
\newtheorem{remark}[theorem]{Remark}
\newtheorem{question}[theorem]{Question}
\begin{document}

\title[Discrete random walks on the group Sol]
      {Discrete random walks on the group Sol}
\author{J\'{e}r\'{e}mie Brieussel, Ryokichi Tanaka}
\date{24th February 2014}

\maketitle

\begin{abstract}
The harmonic measure $\nu$ on the boundary of the group $Sol$ associated to a discrete random walk of law $\mu$ was described by Kaimanovich. We investigate when it is absolutely continuous or singular with respect to Lebesgue measure. By ratio entropy over speed, we show that any countable non-abelian subgroup admits a finite first moment non-degenerate $\mu$ with singular harmonic measure $\nu$. On the other hand, we prove that some random walks with finitely supported step distribution admit a regular harmonic measure. Finally, we construct some exceptional random walks with arbitrarily small speed but singular harmonic measures. The two later results are obtained by comparison with Bernoulli convolutions, using results of Erd\"os and Solomyak.
\end{abstract}

\section{Introduction}

Let $Sol$ denote the semi-direct product $\R \ltimes \R^2$ with action $z.(x,y)=(e^{-z}x,e^zy)$, endowed with the left-invariant Riemannian metric $ds^{2}=dz^{2}+e^{2z}dx^{2}+e^{-2z}dy^{2}$. The group $Sol$ is the simplest unimodular solvable non-nilpotent Lie groups. As a three dimensional manifold, it is one of the eight ``Thurston geometries". Even though it has both negative and positive sectional curvature, the visual boundary has been described by Troyanov \cite{T} as the union of two circles intersecting at two points. These two circles correspond to the boundaries of two hyperbolic planes of which $Sol$ is the horocyclic product. Measure-theoretically, this boundary is not different from the disjoint union of two real lines.

The aim of this article is to study the boundary behavior of discrete random walks, that is sequences of random variables $W_n=X_1\dots X_n$, where $X_i$ are independent group elements following a probability law $\m$ with discrete support and finite first moment. Kaimanovich has described the boundary behavior of such random walks on solvable Lie groups in \cite{Ksolv}. 

In the present particular case, this behavior depends firstly on the mean $\a=\E \m_z$ of the projection of $\m$ on the vertical $z$-axis. If $\a>0$, then the $x$-coordinate of the random walk converges almost surely to a real random variable $\x(\o)$. By adding a point at infinity, this real random variable can be seen as belonging to the boundary circle of the $zx$-hyperbolic plane. Similarly if $\a<0$, the $y$-coordinates converge to a real random variable $\x'(\o)$ viewed in the boundary of the $zy$-plane. The distribution $\n$ on $\R$ of the random variable $\x$ or $\x'$ is called the harmonic measure on the boundary of $Sol$. The distribution $\n$ is supported on one of these two real lines (or equivalently on one of the two boundary circles) according to the sign of the vertical drift $\a$.

When $\a \neq 0$ and the measure $\m$ is non-degenerate on a cocompact lattice, the measure space $(\R,\n)$ is actually the Poisson boundary of the random walk, as shown by Kaimanovich \cite{Ksolv}. If $\a=0$, the behavior of the random walk is not described in terms of the geometric boundary of $Sol$. In particular, when $\a=0$ and the measure $\m$ is supported on a cocompact lattice, the corresponding Poisson boundary is trivial.

A natural question is to determine the regularity of the harmonic measure $\n$ with respect to Lebesgue measure on $\R$. A measure is said non-degenerate (resp. non-degenerate on a group $\G$) if the semi-group generated by its support is in fact a group (resp. the group $\G$). We present the following results.

\begin{theorem}\label{main}
(1) For any countable non-abelian subgroup $\G$ of $Sol$, there exists a non-degenerate probability measure $\m$ on $\G$ such that the associated harmonic measure $\n$ is non-atomic and singular with respect to Lebesgue measure. 

(2) There exists a finitely supported non-degenerate probability measure $\m$ on $Sol$ such that  the harmonic measure $\n$ is absolutely continuous with respect to Lebesgue measure. In fact for any $k \in \N$, there exists such a $\m$ whose harmonic measure $\n$ admits a density function in the class $C^k$.

(3) For any $\a > 0$, there exists a finitely supported non-degenerate probability measure $\m$ on $Sol$ such that the associated random walk has speed $\a$ and the harmonic measure is non-atomic and singular with respect to Lebesgue measure.
\end{theorem}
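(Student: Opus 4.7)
The $Sol$ multiplication law
$(z,x,y)(z',x',y')=(z+z',\,x+e^{-z}x',\,y+e^{z}y')$
gives the $x$-coordinate of $W_n=X_1\cdots X_n$ explicitly as $\sum_{k=1}^{n}e^{-S_{k-1}}x_k$, where $S_n=z_1+\cdots+z_n$ and $X_k=(z_k,x_k,y_k)$. When $\a=\E\m_z>0$, the strong law of large numbers forces $S_n\to+\infty$ almost surely, so the series $\x=\sum_{k\geq1}e^{-S_{k-1}}x_k$ converges almost surely, and its law is the harmonic measure $\n$. All three parts of the theorem hinge on this ``Bernoulli-convolution-type'' representation.

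\textbf{Part (1).} The first step is the upper bound
\[
\dim_H\n \leq \frac{h(\m)}{\ell(\m)},
\]
where $h(\m)$ is the Avez asymptotic entropy and $\ell(\m)=\a$ is the vertical drift. It is obtained by covering $\mathrm{supp}\,\n$ by the intervals of length $\asymp e^{-S_n}$ generated by the first $n$ steps, combined with the Shannon--McMillan--Breiman theorem along trajectories; the geometric input is that $e^{-z}$ is precisely the contraction rate of the $Sol$-action on the boundary $x$-line. Given a countable non-abelian subgroup $\G\subset Sol$, the strategy is to fix two non-commuting elements of $\G$ to ensure non-degeneracy, enumerate the remaining elements $\{\g_i\}_{i\in\N}$, and concentrate the bulk of the mass on one element of $\G$ with large positive $z$-component while spreading a rapidly decaying tail over the enumeration. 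This keeps the first moment finite and makes $h(\m)$ arbitrarily small, while $\ell(\m)$ is controlled by the dominant atom; tuning the parameters forces $h(\m)<\ell(\m)$, hence $\dim_H\n<1$ and $\n$ is singular. Non-atomicity follows from a standard $0$--$1$ argument using the absence of global fixed points of the $Sol$-action on $\R$.

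\textbf{Parts (2) and (3).} For $\m$ supported on a Bernoulli-type configuration such as $\{(z_0,\pm1,0)\}$ minimally enlarged to be non-degenerate, the series $\x$ is, after integrating out the auxiliary steps, essentially the classical Bernoulli convolution $\n_\l$ with parameter $\l=e^{-z_0}\in(0,1)$. Solomyak's theorem supplies $\l\in(1/2,1)$ for which $\n_\l$ is absolutely continuous with an $L^2$ density, yielding the first half of (2). To obtain the $C^k$ refinement, I would enlarge the support so that $\x$ decomposes as a sum of $k+1$ stochastically independent Bernoulli convolutions at distinct Solomyak-good scales, and then use that convolving $k+1$ square-integrable densities with polynomial Fourier decay yields a $C^k$ density. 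For (3), taking $\l=1/\th$ with $\th$ Pisot makes $\n_\l$ singular by Erd\"os's theorem, while an arbitrary prescribed speed $\a>0$ is achieved by convexly combining this $\m$ with a Dirac at a pure $z$-translation, which rescales the drift without disturbing the Pisot-singular boundary series.

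\textbf{Main obstacle.} The principal difficulty is the $C^k$ refinement in (2): carving $k+1$ stochastically independent Bernoulli-convolution summands out of the single random series $\x$, whose terms all share the common partial sums $S_k$. The plan will be to partition $\N$ into $k+1$ sparse subsequences on which disjoint directions of $\mathrm{supp}\,\m$ are active, so that the corresponding subsums of $\x$ become (asymptotically) independent while remaining at Solomyak-good scales and the whole $\m$ stays finitely supported; verifying enough independence to transfer $L^2$ Fourier decay through $k+1$ convolutions is where the real work will lie.
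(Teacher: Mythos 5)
Your part (1) is essentially the paper's argument: the bound $\dim\nu\le h_\mu/|\alpha|$ proved via exponentially shrinking cylinders and Shannon--McMillan--Breiman, followed by a measure that makes entropy small relative to drift (the paper instead boosts the drift by mixing in $\delta_{g^l}$ for large $l$, but the effect is the same). One caveat: ``absence of global fixed points'' is not the right non-atomicity criterion. The action on $\partial^+Sol$ only sees the $(z,x)$-coordinates, so a non-abelian $\Gamma$ (e.g.\ generated by $(1,0,0)$ and $(0,0,1)$) can perfectly well fix a point of $\partial^+Sol$; what is needed is that every orbit is infinite, and the paper arranges this by checking the fixed points $p^{\pm}(g)$ on \emph{both} boundary lines and steering the walk (via the sign of $l$) to the side on which the action is proper.

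Parts (2) and (3) contain genuine gaps. For the absolute continuity in (2), the missing idea is the conditional factorization: one must impose that $\pi_\ast\mu=\mu_z\times\mu_x$ is a product, condition on the vertical path $\zeta$, regroup the terms of $\xi$ by the level $k$ they occupy, and observe that since the upward-transient vertical walk visits every level $k\ge 0$ at least once, the Bernoulli convolution $b_{e^{-\gamma}}$ splits off as a convolution factor of $\nu_\zeta$; absolute continuity of a convolution factor then propagates. Saying the series is ``essentially'' a Bernoulli convolution after ``integrating out the auxiliary steps'' skips exactly this step. Your $C^k$ plan does not work as stated: Solomyak's theorem gives an $L^2$ density, i.e.\ $\hat b_\lambda\in L^2$, not pointwise polynomial Fourier decay, and convolving $k+1$ measures with $L^2$ densities does not yield a $C^k$ density; conversely, if you invoke polynomial decay you are already using Erd\"os's 1940 theorem, at which point the convolution trick is superfluous. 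The paper simply uses the pointwise domination $|\hat\nu(t)|\le|\hat b_{e^{-\gamma}}(t)|$ (valid conditionally on $\zeta$ because $n(\zeta,k)\ge1$ for $k\ge0$) together with Erd\"os's $C^{k+2}$ result. The most serious gap is in (3): absolute continuity passes through convolution and mixtures, but singularity does not, so the singularity of $b_\lambda$ for Pisot $1/\lambda$ transfers no information to the harmonic measure of the perturbed, non-degenerate walk --- the extra randomness could a priori smooth everything out. The paper's proof of this step is a quantitative Fourier lower bound: Jensen's inequality and Fubini reduce $\hat\nu$ along the sequence $t_l=2\pi\beta^{l}$ to an Erd\"os-type infinite product, which stays bounded away from zero only because $\mu_x$ is taken lazy with $q_0>\tfrac12$ (so every factor $2q_1\cos(t\beta^k)+q_0$ is positive) and because the expected occupation times $\mathbb{E}\,n(\zeta,k)$ are uniformly bounded. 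Your ``without disturbing the Pisot-singular boundary series'' asserts the conclusion rather than proving it, and if the added $z$-translation is not in $\gamma\mathbb{Z}$ the Pisot mechanism is destroyed outright.
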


The first part of Theorem \ref{main} is proved after Corollary \ref{fingen}. The two statements of the second part are Theorem \ref{thmsolomyak} and Theorem \ref{Ck}. The third part is stated more precisely as Theorem \ref{thmerdos}.

The measures $\m$ of Theorem \ref{main} (2) and (3) are very specific. In particular, their vertical components are supported on a lattice, and they satisfy an independence condition between their coordinate components. We are able to prove regularity by showing that these measures are related to the Bernoulli convolutions $b_\l$ with parameter $\l$ in $[\frac{1}{2},1[$, i.e. the laws of the real random variables $\sum_{j=0}^{\infty}x_{j}\l^{j}$, where $\{x_j\}_{j \in \N}$ is a sequence of independent variables equidistributed on $\{1,-1\}$. Absolute continuity and existence of $C^k$-densities of Bernoulli convolutions $b_\l$ for most values of the parameter in appropriate left-neighborhoods of $1$ were proved by Erd\"os \cite{E2} and Solomyak \cite{S}. 

On the other hand, for a parameter $\l$ that is the inverse of a Pisot number, Erd\"os proved that the Bernoulli convolution is singular with respect to Lebesgue \cite{E}. Theorem \ref{main} (3) is built on this result.

An interesting feature of the present work is to show the existence of a finitely supported probability measure on a Lie group, whose harmonic measure is absolutely continuous with respect to the natural measure on the boundary. Recall that the existence of measures with discrete support and finite first moment with absolutely continuous harmonic measures dates back to Furstenberg and the first rigidity results \cite{F} but their supports are a priori infinite. 

The question of finding finitely supported measures with absolutely continuous harmonic measure received a great deal of attention after the work of Kaimanovich and Le Prince \cite{KLeP} concerning discrete random walks on $SL(d,\R)$. For special linear groups, the harmonic measure is defined on the full flag manifold $\Bc=SL(d, \R)/P$, where $P$ is the parabolic subgroup of upper triangular matrices. They prove that any countable Zariski dense subgroup of $SL(d,\R)$ admits a symmetric non-degenerate probability measure whose harmonic measure is singular with respect to the natural Lebesgue measure on $\Bc$. Theorem \ref{main} (1) gives the same result (without symmetry) for $Sol$. It is proved by the same method as in \cite{KLeP}, estimating the Hausdorff dimension of $\n$ in terms of the ratio entropy by speed for the random walk.

Moreover, Kaimanovich and Le Prince conjectured that the harmonic measure on the flag space would be singular for any finitely supported non-degenerate symmetric random walk on $SL(d,\R)$. This conjecture was disproved by Bourgain \cite{Bo} who gave examples of finitely supported symmetric random walks on $SL(2,\R)$ with harmonic measures admitting a density in the class $C^k$ for arbitrary $k$ in $\N$. Theorem \ref{main} (2) is an analogue for $Sol$. Note however, that whereas the size of the support of the measure tends to infinity with the required $C^k$ regularity in Bourgain's exemples, the measures of Theorem \ref{main} (2) can all be chosen with a support of size 4 (see Theorems \ref{thmsolomyak} and \ref{Ck}).

Let us also mention that prior to Bourgain's examples,  B\'ar\'any, Pollicott and Simon constructed examples of finitely supported random walks on semi-groups of $SL(2,\R)$ with absolutely continuous harmonic measure \cite{BPS}. On the other hand, the statement of Kaimanovich-Le Prince conjecture turned out to be true for the Mapping Class Group of an orientable surface. There, the Poisson boundary is the space of projective measure foliations with hitting distribution \cite{KM}, and the harmonic measure is singular with respect to the natural Lebesgue measure class  provided the step distribution has finite support, as shown by Gadre \cite{G}.

In a discrete vs continuous dichotomy, the present random walks are related to the  Brownian motion associated with a Laplace operator with drift on the group $Sol$, studied by  Brofferio, Salvatori and Woess \cite{BSW}. The Brownian motion with vertical drift $\a$ on $Sol$ behaves like a random walk with vertical mean $\E \m_z=\a$, namely, it converges to the $x$ (resp. $y$) boundary line when the drift parameter $\a$ is positive (resp. negative). The induced distributions have similar descriptions as seen by comparing Proposition 4.2 in \cite{BSW} to Proposition \ref{xsi} below. Moreover, Brofferio, Salvatori and Woess describe an explicit $C^\infty$-density for the harmonic measure of the Brownian motion (Remark 4.3 in \cite{BSW}), whereas we obtain $C^k$-densities for larger $k$ by shrinking the size of the increments (see Theorem \ref{Ck}).

More generally, our work takes place in the growing subject of random processes on horocyclic products, such as the Diestel-Leader graphs, horocyclic products of regular trees, studied by Bertacchi \cite{B} and Kaimanovich and Woess \cite{KW}, and the treebolic space, the product of a hyperbolic plane and a tree, studied by Bendikov, Saloff-Coste, Salvatori and Woess \cite{Bend}. The group $Sol$ is the horocyclic product of two hyperbolic planes as a Riemannian manifold \cite{Wo}. In this view point, $Sol$ is realized in the product of two real affine groups which act on two hyperbolic planes respectively. In particular, since the boundary behavior of a random walk on $Sol$ is described by the boundary of one only of the two hyperbolic planes inside, according to the vertical drift, the same results as Theorem \ref{main} hold for the real affine group as well.

The organization of the paper is the following. In Section \ref{sol}, we describe the geometry of $Sol$  and in particular its visual boundary. Section \ref{poissonSol} is devoted to the behavior of random walks on $Sol$, and largely follows Kaimanovich's paper \cite{Ksolv}. Theorem \ref{main} (1) is proved in Section \ref{sing}. It follows from Theorem \ref{Cantor}, which asserts that the Hausdorff dimension of the harmonic measure is less than the ratio entropy over speed of the random walk. In Section \ref{abs}, we describe briefly Bernoulli convolution, with Erd\"os and Solomyak's Theorems and prove Theorem \ref{main} (2). Section \ref{Pisot} focuses on the particular case of ``Pisot" vertical lattices. We prove Theorem \ref{main} (3) and present a natural unanswered question about lattices of $Sol$. The paper ends with a short appendix devoted to classical facts about Pisot numbers and random walks on the integers.

{\it Landau asymptotic notation.}
Throughout the paper, we denote by $f(n)=o(g(n))$ if $\frac{f(n)}{g(n)} \to 0$ as $n \to \infty$, by $f(n)=O(g(n))$ if there exists some constant $C > 0$ independent of $n$ such that $f(n) \le Cg(n)$, and by $f(n) \sim g(n)$ if $\frac{f(n)}{g(n)} \to 1$.

\section{Description of the solvable Lie group $Sol$}\label{sol}

\subsection{Description as horocyclic product}

Recall that $Sol$ is the semi-direct product $\R \ltimes \R^2$. We denote its elements by coordinates $(z,x,y)$ in $\R^3$, where $z$ is considered a vertical and $x,y$ horizontal components, with product rule 
$$(z,x,y)(z',x',y')=(z+z',x+e^{-z}x',y+e^zy'),$$
and inverse $(z,x,y)^{-1}=(-z,-e^zx,-e^{-z}y)$. The group $Sol$ has a matrix representation of the form
$$
Sol=\left\{\left(\begin{array}{ccc} e^{-z} & x & 0 \\ 0 & 1 & 0 \\ 0 & y & e^{z} \end{array}\right) \Bigg| x, y,z \in \R \right\}.
$$
The Riemannian metric $ds^{2}=dz^{2}+e^{2z}dx^{2}+e^{-2z}dy^{2}$ is left-invariant. The $zx$-planes are totally geodesic hyperbolic planes in $Sol$. Indeed, the upper half-plane $\{(x,\x)|\x>0\}$ with metric $\frac{1}{\x^2}(d\x^2+dx^2)$ is turned into $\R^2$ with metric $dz^2+e^{2z}dx^2$ under the change of variable $z=-\log \x$. The later is called the logarithmic model of the hyperbolic plane. Similarly, the $zy$-planes are totally geodesic hyperbolic planes in $Sol$. In this case, the usual upper half-plane model is obtained by setting $z=\log \x$. Note that the $zx$ and $zy$-planes have ``upside-down" $z$-coordinate.

As a Riemannian manifold, $Sol$ is the horocyclic product of two hyperbolic planes, that is the hypersurface $\{z+z'=0\}$ of the direct product of a hyperbolic $zx$-plane with a hyperbolic $z'y$-plane, which is the 4-manifold $\Hyp^2 \times \Hyp^2$ homeomorphic to $\R^4$ with metric $ds^{2}=dz^{2}+e^{2z}dx^{2}+dz'^2+e^{2z'}dy^{2}$.

An important feature of the geometry of $Sol$ is that contrary to the $zx$ and $zy$-planes the horizontal $xy$-planes are very far from being totally geodesic. More precisely: 

\begin{lemma}\label{exp-distortion}
The horizontal plane $H:=\{(0, x, y) | (x, y) \in \R^{2}\}$ is embedded in $Sol$ with exponential distortion, i.e.,
$$
2\log \left(\frac{1}{4}\|(x, y)\|_{H}+\frac{1}{2}\right) \le d\left(id, (0, x,y)\right) \le 4\log \left(\| (x, y)\|_{H}+1\right),
$$
for any $(x, y) \in \R^{2}$, where $\|\cdot\|_{H}$ is the standard Euclidean norm in $H$.
\end{lemma}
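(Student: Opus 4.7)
My plan is to prove the upper bound by exhibiting an explicit broken geodesic and the lower bound by a $1$-Lipschitz coordinate projection onto a totally geodesic $\Hyp^2$.

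For the upper bound, I would first note that the $zx$-plane $\{y=0\}$ is totally geodesic with induced metric $dz^2+e^{2z}dx^2$, the logarithmic model of $\Hyp^2$. Solving $\cosh d=1+x^2/2$ shows that the hyperbolic distance there between $(0,0)$ and $(0,x)$ equals $2\sinh^{-1}(|x|/2)\le 2\log(|x|+1)$. Then, continuing within the totally geodesic $zy$-plane through $(0,x,0)$ (the left-translate of $\{x=0\}$ by $(0,x,0)$), whose induced metric $dz^2+e^{-2z}dy^2$ is again the hyperbolic metric up to $z\mapsto -z$, we can reach $(0,x,y)$ in hyperbolic distance $2\sinh^{-1}(|y|/2)\le 2\log(|y|+1)$. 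Concatenating these segments and summing yields
$$d\bigl(\mathrm{id},(0,x,y)\bigr)\le 2\log\bigl((|x|+1)(|y|+1)\bigr)\le 4\log(\|(x,y)\|_H+1),$$
where the last inequality rests on the elementary bound $(|x|+1)(|y|+1)\le(\|(x,y)\|_H+1)^2$, which follows from $2|xy|\le x^2+y^2$ and $|x|+|y|\le\sqrt 2\,\|(x,y)\|_H$.

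For the lower bound, I would observe that the projection $\pi_{zx}:Sol\to\Hyp^2$, $(z,x,y)\mapsto(z,x)$, is $1$-Lipschitz, since pointwise $\sqrt{\dot z^2+e^{2z}\dot x^2+e^{-2z}\dot y^2}\ge\sqrt{\dot z^2+e^{2z}\dot x^2}$. Therefore $d(\mathrm{id},(0,x,y))\ge 2\sinh^{-1}(|x|/2)$, and symmetrically $d\ge 2\sinh^{-1}(|y|/2)$. Combining with $\max(|x|,|y|)\ge\|(x,y)\|_H/\sqrt 2$ and $\sinh^{-1}(t)=\log(t+\sqrt{1+t^2})\ge\log(t+1)$ gives
$$d\bigl(\mathrm{id},(0,x,y)\bigr)\ge 2\log\Bigl(\tfrac{\|(x,y)\|_H}{2\sqrt 2}+1\Bigr)\ge 2\log\Bigl(\tfrac{\|(x,y)\|_H}{4}+\tfrac{1}{2}\Bigr),$$
using $\tfrac{1}{2\sqrt 2}>\tfrac{1}{4}$ and $1>\tfrac{1}{2}$.

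No real obstacle is anticipated; all steps reduce to one-variable hyperbolic computations and elementary real inequalities. The only mild care lies in matching the exact constants $\tfrac14$ and $\tfrac12$ in the statement, which is handled above by absorbing slack from the intermediate bounds.
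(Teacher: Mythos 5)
Your proof is correct and follows essentially the same route as the paper's: the key ingredient in both is the explicit hyperbolic distance $d(id,(0,x,0))=2\log\bigl(\sqrt{1+|x|^{2}/4}+|x|/2\bigr)=2\sinh^{-1}(|x|/2)$ computed in the totally geodesic $zx$-plane (and its $zy$-analogue). You merely make explicit the two steps the paper leaves implicit --- the triangle inequality through $(0,x,0)$ for the upper bound and the $1$-Lipschitz coordinate projections for the lower bound --- and your constant-chasing is accurate.
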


\begin{proof}
A direct calculation using the hyperbolic metric on the $zx$-plane shows that
$$
d((0,0,0), (0,x,0))=2\log\left(\sqrt{1+\frac{|x|^{2}}{4}}+\frac{|x|}{2}\right),
$$
and the same equality holds for $(0,0,y)$.
These equalities yield the lemma.
\end{proof}

\subsection{The visual boundary of $Sol$}

A geodesic ray in a metric space $X$ is a function $\g:\R^+ \rightarrow X$ such that $d(\g(s),\g(t))=|t-s|$ for all $s,t \in \R^+$. Two geodesic rays $\g_1,\g_2$ are equivalent if their images are at bounded Hausdorff distance, i.e. if there exists $C$ such that $d(\g_1(t),\g_2(\R^+)) \leq C$ and $d(\g_2(t),\g_1(\R^+)) \leq C$ for all $t \in \R^+$. The visual boundary of the space $X$ with respect to the base point $x_0$ is the set of equivalence classes of geodesic rays starting at $x_0$.

The visual boundary of the hyperbolic plane is a circle $S^1$. In the $zx$-logarithmic model of $\Hyp^2$, any geodesic ray is equivalent to one of the vertical geodesics $\g_{x_0}(z)=(z,x_0)$ for $x_0 \in \R$ and $\g_\infty(z)=(-z,0)$ (and all the downwards vertical geodesics $\g(z)=(-z,x_0)$ are equivalent to $\g_\infty$). The visual boundary of $Sol$ was described by Troyanov.

\begin{theorem}[Troyanov \cite{T}]
The visual boundary $\partial Sol$ of $Sol$ with respect to a given point is the union of two circles that intersect at two points.
\end{theorem}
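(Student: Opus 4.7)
The plan is to realize the two circles as the visual boundaries of the two totally geodesic hyperbolic planes $\Pi_{1}=\{y=0\}$ and $\Pi_{2}=\{x=0\}$ through the identity. Each $\Pi_{i}$ is isometric to $\Hyp^{2}$ via the log model recalled in Section \ref{sol}, so $\partial\Pi_{i}$ is a circle; let $C_{i}\subset\partial Sol$ denote its image under the map on equivalence classes of rays induced by inclusion. I would then establish in turn: (a) these maps are injective, so $C_{1}$ and $C_{2}$ are embedded circles; (b) $C_{1}\cap C_{2}$ consists of exactly the two classes $[\gamma_{\pm}]$ of the vertical geodesics $\gamma_{\pm}(t)=(\pm t,0,0)$, which lie in both $\Pi_{1}$ and $\Pi_{2}$; (c) every geodesic ray from the identity is equivalent to one lying in $\Pi_{1}$ or $\Pi_{2}$, so $C_{1}\cup C_{2}=\partial Sol$.

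For (a), each ray in $\Pi_{1}$ (metric $dz^{2}+e^{2z}dx^{2}$) is equivalent either to an upward vertical $\gamma_{x_{0}}(t)=(t,x_{0},0)$ for a unique $x_{0}\in\R$, or to the single downward class $[\gamma_{-}]$; symmetrically for $\Pi_{2}$ with parameter $y_{0}\in\R$ together with the single upward class $[\gamma_{+}]$. Using left invariance and the inverse formula in $Sol$ one computes
$$ d\bigl(\gamma_{x_{0}}(t),\gamma_{x_{0}'}(t)\bigr)=d\bigl(id,(0,e^{t}(x_{0}'-x_{0}),0)\bigr), $$
and the lower bound of Lemma \ref{exp-distortion} yields a distance of order $2t+O(1)$ when $x_{0}\neq x_{0}'$, so inequivalent rays in $\Pi_{1}$ remain inequivalent in $Sol$. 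For (b), $\gamma_{\pm}$ lies in both $\Pi_{1}$ and $\Pi_{2}$, giving $[\gamma_{\pm}]\in C_{1}\cap C_{2}$; conversely, any common boundary point is represented by mutually asymptotic rays in the two planes, and since the only upward class realised in $\Pi_{2}$ is $[\gamma_{+}]$, matching forces $x_{0}=0$, with the symmetric argument on the downward side.

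The main step, and the principal obstacle, is (c). I would exploit the Euler-Lagrange conserved quantities $c_{x}:=e^{2z}\dot x$ and $c_{y}:=e^{-2z}\dot y$ for the metric, which turn the unit-speed condition into $\dot z^{2}+c_{x}^{2}e^{-2z}+c_{y}^{2}e^{2z}=1$. The delicate case is $c_{x}c_{y}\neq 0$: then the potential $V(z)=c_{x}^{2}e^{-2z}+c_{y}^{2}e^{2z}$ is coercive, so $z$ is confined to a bounded interval, $\dot x$ and $\dot y$ stay bounded away from $0$, and $(x(t),y(t))$ grows linearly in $t$. Combining the elementary bound $d(id,(z,x,y))\le|z|+d(id,(0,x,y))$ with the upper estimate of Lemma \ref{exp-distortion} then yields $d(id,\gamma(t))=O(\log t)$, contradicting $d(id,\gamma(t))=t$ for a globally minimizing ray once $t$ is large. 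Hence $c_{x}c_{y}=0$ on any ray from the identity; if $c_{y}=0$ then $\dot y\equiv 0$ and $y(0)=0$ place $\gamma\subset\Pi_{1}$ with $[\gamma]\in C_{1}$, and symmetrically $c_{x}=0$ gives $[\gamma]\in C_{2}$, while $c_{x}=c_{y}=0$ recovers the vertical geodesics and the two intersection points.
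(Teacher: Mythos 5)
The paper gives no proof of this statement---it is quoted from Troyanov \cite{T}---but your argument is correct and follows exactly the route the paper sketches for Troyanov's proof: the conserved momenta $c_x=e^{2z}\dot x$, $c_y=e^{-2z}\dot y$ confine $z$ to a bounded interval when $c_xc_y\neq 0$, so such geodesics have distance growth $O(\log t)$ by Lemma \ref{exp-distortion} and cannot be rays, leaving only the geodesics of the two totally geodesic hyperbolic planes. One small point to tighten in step (a): equivalence of rays requires bounding the distance from $\gamma_{x_0}(t)$ to the \emph{whole image} of $\gamma_{x_0'}$, not just to $\gamma_{x_0'}(t)$; combining your equal-time estimate with $d(id,(z,x,0))\ge\max\bigl(|z|,\,d(id,(0,x,0))-|z|\bigr)$ gives divergence at least $t+O(1)$ uniformly over the second ray's parameter, which closes this.
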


Troyanov proved this result by an explicit parametrization of all geodesics of $Sol$. An important observation is that the Riemannian geodesics not included in a $zx$-plane or a $zy$-plane are not rays, but only locally geodesic. Moreover, their $z$-coordinates are periodic in time.

These two circles consist of the visual boundaries of the $zx$-plane and the $zy$-plane. Following \cite{EFW2}, we call a geodesic ray of the form $\g^+_{x_0,y_0}(t)=(t,  x_{0}, y_{0})$ an {\it upward vertical geodesic ray}, and $\g^-_{x_0,y_0}(t)=(-t,  x_{0}, y_{0})$ a {\it downward vertical geodesic ray}. The equivalence classes of vertical geodesics are determined by
\begin{eqnarray*} \g^+_{x_0,y_0} \simeq \g^+_{x_1,y_1} &\Leftrightarrow & x_0=x_1, \\
\g^-_{x_0,y_0} \simeq \g^-_{x_1,y_1} &\Leftrightarrow & y_0=y_1. \end{eqnarray*}
The first circle can be parametrized by $\{\g^+_{x_0,0}|x_0 \in \R\} \cup \{\g^-_{0,0}\}$, corresponding to the boundary of the $zx$-plane, the second circle by $\{\g^-_{0,y_0}|y_0 \in \R\} \cup \{\g^+_{0,0}\}$, corresponding to $zy$-plane.

Since we will be interested in measure-theoretic properties on the boundary, we will describe it as the union of two disjoint real lines, parametrized respectively by the upward and downward vertical geodesics, rather than as a union of two circles:
\begin{eqnarray*}
\partial^+Sol &=& \{\g^+_{x_0,0}|x_0 \in \R\}, \\
\partial^-Sol &=& \{\g^-_{0,y_0}|y_0 \in \R\}.
\end{eqnarray*}
For further description of the group $Sol$, we refer to the literature, for instance \cite{T} for a Riemannian view point, \cite{EFW}, \cite{EFW2} for large scale geometry and rigidity, \cite{BSW} for the description of a compactification boundary. Note however that the visual boundary is only a subset of the compactification boundary of \cite{BSW}. Topologically, the visual boundary is the union of two circles intersecting at $\g_{0,0}^+$ and $\g_{0,0}^-$, because $\g^+_{x_0,0}\rightarrow \g^-_{0,0}$ as $|x_0| \rightarrow \infty$ and $\g^-_{0,y_0}\rightarrow \g^+_{0,0}$ as $|y_0| \rightarrow \infty$. On the other hand, the compactification boundary has the shape of a figure 8, because it contains an extra point $\d$ (not corresponding to a geodesic) such that $\g^+_{x_0,0}\rightarrow \d$ as $|x_0| \rightarrow \infty$ and $\g^-_{0,y_0}\rightarrow \d$ as $|y_0| \rightarrow \infty$. This difference has no importance measure-theoretically, so the results of the present paper apply to both boundaries.

\section{Boundary behavior of random walks}\label{poissonSol}

\subsection{Random walks on Sol}\label{RWonSol} Let $\m$ be a probability measure on $Sol$. We always assume that $\m$ has a finite first moment, i.e. satisfies $\int_{Sol} d(id,x)d\m(x) < \infty$ where $d$ is the left-invariant Riemannian metric on $Sol$.

We consider the random walk $\{W_{n}\}_{n=0}^{\infty}$ starting at $id$ of increment law $\m$, i.e. the sequence of random variables $W_{n}=X_{1}\cdots X_{n}$ and $W_{0}=id$, where $\{X_{j}\}_{j=0}^{\infty}$ is a sequence of independent random elements of law $\m$. With respect to the $\R^3$-coordinates, we write $X_j=(z_j, x_j,y_j)$ and $W_n=(S_n,U_n,V_n)$. Explicitely:
\begin{eqnarray}\label{prodlaw}
S_n &=& z_{1}+\cdots+z_{n}, \notag\\
U_n &=&  x_{1}+x_{2}e^{-S_{1}}+\cdots+x_{n}e^{-S_{n-1}}, \\
V_n &=& y_{1}+y_{2}e^{S_{1}}+ \cdots+y_{n}e^{S_{n-1}}. \notag
\end{eqnarray}
We denote by $\O$ the space of sample paths $\{W_{n}\}_{n=0}^{\infty}$ of random walks emanating from $id$, and by $\P$ the associated probability distribution on $\O$ given as the push-forward measure of $\m^{\times\infty}$ on $Sol^{\times \infty}$ under the map $\{X_{j}\}_{j=0}^{\infty} \mapsto \{W_{n}\}_{n=0}^{\infty}$.

Let $\m_{z}$ (respectively $\m_x$, $\m_y$) be the image measure under the projection to the $z$-component $Sol \to \R$, $(z, x, y) \mapsto z$ (respectively to the $x$ and $y$-components). As $\m$ has a finite first moment, the measure $\m_{z}$ admits a finite mean $\a=\E\m_{z}$ which plays a crucial role in the boundary behavior, since it is the vertical drift of the random walk (see Theorem \ref{LLN} and Corollary \ref{speed}). 

Let $\pi$ denote the projection $(z,x,y) \mapsto (z,x)$ of $Sol$ onto the $zx$-plane. We say that the measure $\m$ gives independance to the $z$ and $x$-components if the projected measure on the $zx$-plane is the product of the image measures on the $z$ and $x$-components, i.e. $\pi_\ast \m=\m_z \times \m_x$.

The assumption of finite first moment of $\m$ gives a geometric control on the behavior of the sequence $\{X_{j}\}_{j=0}^{\infty}$. In particular:

\begin{lemma}\label{ffm}
For an i.i.d. sequence $\{X_{j}=(z_{j},x_{j}, y_{j})\}_{j=1}^{\infty}$ of law $\mu$ with finite first moment, we have $|x_{j}|, |y_{j}|=e^{o(j)}$, $\P$-a.s.
\end{lemma}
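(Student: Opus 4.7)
The strategy is to bound $|x_j|$ and $|y_j|$ by the Riemannian distance $d(id, X_j)$, and then exploit that the latter forms an i.i.d.~sequence with finite mean to conclude by a Borel--Cantelli argument. The two ingredients are (i) the exponential distortion of the horizontal plane provided by Lemma \ref{exp-distortion}, and (ii) the elementary observation that, for i.i.d.~non-negative $T_j$ with $\E T_1 < \infty$, one has $T_j = o(j)$ almost surely.

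First I would establish that $d(id, X_j) = o(j)$ almost surely. Setting $T_j := d(id, X_j)$, the $T_j$ are i.i.d.~non-negative with $\E T_1 < \infty$ by the finite-first-moment hypothesis. For any $\e > 0$, the standard tail-sum estimate yields
$$
\sum_{j \geq 1} \P(T_j > \e j) \; = \; \sum_{j \geq 1} \P(T_1 > \e j) \; \leq \; \frac{1}{\e}\,\E T_1 \; < \; \infty,
$$
so by Borel--Cantelli $T_j \leq \e j$ eventually a.s.; taking $\e = 1/k$ and intersecting over $k \in \N$ gives $T_j/j \to 0$ a.s.

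Next I would relate the horizontal part $(0, x_j, y_j)$ back to $X_j$. A direct computation from the product rule shows $X_j^{-1}(0, x_j, y_j) = (-z_j, 0, 0)$, so by left-invariance and the triangle inequality
$$
d\bigl(id,(0,x_j,y_j)\bigr) \; \leq \; d(id, X_j) + d\bigl(id,(-z_j,0,0)\bigr) \; = \; d(id, X_j) + |z_j| \; \leq \; 2\, d(id, X_j),
$$
the last bound using that the Riemannian metric dominates $dz^2$, whence $|z_j| \leq d(id, X_j)$. Lemma \ref{exp-distortion} then yields
$$
\max(|x_j|, |y_j|) \; \leq \; \|(x_j,y_j)\|_H \; \leq \; 4\exp\!\bigl(d(id,(0,x_j,y_j))/2\bigr) \; \leq \; 4\, e^{d(id, X_j)},
$$
so $\log |x_j|$ and $\log |y_j|$ are both bounded by $d(id, X_j) + O(1)$. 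Combining with the first step gives $|x_j|, |y_j| = e^{o(j)}$ a.s., as required. The argument is really a combination of two standard ingredients, so there is no genuine obstacle; the only mild care needed is ensuring the exponential distortion bound of Lemma \ref{exp-distortion} is applied to $(0, x_j, y_j)$ rather than directly to $X_j$, which is why one absorbs the vertical displacement $|z_j|$ into $d(id, X_j)$ by the triangle inequality above.
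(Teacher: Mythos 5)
Your proof is correct and follows essentially the same route as the paper: bound $|x_j|,|y_j|$ through $d(id,X_j)$ via Lemma \ref{exp-distortion}, and deduce $d(id,X_j)=o(j)$ a.s.\ from the finite first moment. The only differences are cosmetic --- the paper obtains $d(id,X_j)=o(j)$ from the strong law of large numbers applied to $\frac{1}{n}\sum_{j=1}^n d(id,X_j)$ rather than by Borel--Cantelli, and it does not spell out your (worthwhile) triangle-inequality reduction to the horizontal point $(0,x_j,y_j)$ before invoking the distortion estimate.
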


\begin{proof}
The law of large numbers shows the convergence $\P$-a.s. of 
$$\lim_{n \to \infty}\frac{1}{n}\sum_{j=1}^n d(id,X_j) = \int_{Sol} d(id,x)d\mu(x)<\infty,$$ 
thus $d(id,X_j)=o(j)$ and by Lemma \ref{exp-distortion}, we get  $\log|x_j|=o(j)$ and $\log|y_j| =o(j)$, $\P$-a.s..
\end{proof}

\subsection{Hitting distribution on the boundary}\label{Poissonboundary} In this section, we describe the boundary behavior of a random walk with finite first moment on $Sol$. It is a particular case of results by Kaimanovich \cite {Ksolv} applying to certain class of solvable Lie groups. For completeness, we give short proofs in our simplified setting. The description below should be compared with the boundary behavior of the Brownian motion studied in \cite{BSW}.

Recall that the random walk at the time $n$, $W_n=(S_n, U_n, V_n)$ has the form (\ref{prodlaw}) by the product law.
By the law of large numbers, the vertical drift $\a=\E\m_z$ determines $\P$-a.s. the behavior of the vertical component $\lim \frac{1}{n}S_n=\a$, and thus which part $\partial^+Sol$ or $\partial^-Sol$ of the boundary is hit by the random walk. The asymptotic behavior of horizontal components $U_n$ (resp. $V_n$) is described by its almost sure limit $\x$ (resp. $\x'$) which has the form of a random infinite series. The limits $\x$ and $\x'$ will be regarded as the hitting points of random walks on the boundary.

\begin{proposition}\label{xsi}
If $\a>0$, the sequence $(U_n)$ converges $\P$-a.s. to $\x=\sum_{j=1}^{\infty}x_{j}e^{-S_{j-1}}$. \newline If $\a<0$, the sequence $(V_n)$ converges $\P$-a.s. to $\x'=\sum_{j=1}^{\infty}y_{j}e^{S_{j-1}}$.
\end{proposition}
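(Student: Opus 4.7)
The plan is to show that the series defining $\xi$ converges absolutely $\mathbb{P}$-a.s., which immediately gives convergence of the partial sums $U_n$. The two ingredients are the strong law of large numbers applied to the $z$-coordinates, and the geometric control on $|x_j|$ furnished by Lemma~\ref{ffm}.

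First I would observe that the finite first moment assumption on $\mu$ transfers to $\mu_z$: since projection onto the $z$-axis is $1$-Lipschitz (because $ds^2 \ge dz^2$), one has $|z| \le d(id,(z,x,y))$, hence $\E|z_j|<\infty$ and $\mu_z$ has mean $\alpha$. The strong law of large numbers then gives $\frac{1}{n}S_n \to \alpha$ $\P$-a.s. Assume $\alpha>0$. Fixing any $0<\varepsilon<\alpha$, there is $\P$-a.s.\ an index $j_0(\omega)$ such that $S_{j-1} \ge (\alpha-\varepsilon)(j-1)$ for every $j \ge j_0$.

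Next, Lemma~\ref{ffm} provides $|x_j| = e^{o(j)}$ $\P$-a.s., so for any $\delta>0$ (to be chosen below) we $\P$-a.s.\ have $|x_j| \le e^{\delta j}$ eventually. Combining the two estimates,
\begin{equation*}
\bigl|x_j\, e^{-S_{j-1}}\bigr| \;\le\; e^{\delta j}\,e^{-(\alpha-\varepsilon)(j-1)}
\end{equation*}
for all $j$ sufficiently large $\P$-a.s. Choosing $\varepsilon$ and $\delta$ so that $\delta+\varepsilon<\alpha$ makes the right-hand side a summable geometric sequence, so $\sum_j x_j e^{-S_{j-1}}$ converges absolutely $\P$-a.s. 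Hence $U_n$ converges $\P$-a.s.\ to the random variable $\xi$ defined in the statement.

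The case $\alpha<0$ is entirely symmetric: one applies the SLLN in the form $S_{j-1}\le (\alpha+\varepsilon)(j-1)$ eventually for $0<\varepsilon<-\alpha$, and combines it with the bound $|y_j|=e^{o(j)}$ from Lemma~\ref{ffm} to control $|y_j e^{S_{j-1}}|$ by a summable geometric sequence. No real obstacle is expected; the only point requiring a moment of care is verifying finite first moment of the marginal $\mu_z$ from that of $\mu$ on $Sol$, which follows from the Lipschitz bound $|z|\le d(id,(z,x,y))$ noted above.
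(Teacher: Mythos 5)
Your argument is correct and follows essentially the same route as the paper's proof: apply the strong law of large numbers to $S_n$ and Lemma~\ref{ffm} to $|x_j|$, and conclude that the terms $|x_j e^{-S_{j-1}}|$ are eventually dominated by a summable geometric sequence. The extra remark that $\m_z$ inherits finite first moment from $\m$ via $|z|\le d(id,(z,x,y))$ is a small point the paper leaves implicit, but it is consistent with what is stated in Section~\ref{RWonSol}.
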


\begin{proof}
For $\a>0$ and using Lemma \ref{ffm}, for any small $\e >0$,
there exists $\P$-a.s. an integer $N$, depending on the sample, such that for any $n > N$,
$|S_{n}/n - \a| \le \e$, $|x_{n}|, |y_{n}| \le e^{\e n}$.
Thus, we have $e^{-S_{n-1}}|x_{n}| \le e^{-(\a-2\e)n}e^{\a-\e}$ and the series converges.
\end{proof}

Denote by $\n$ (respectively $\n'$) the distribution of the random variable $\x$ (resp. $\x'$) on $\R$, i.e. for any Borel set $A$, set $\n(A)=\P(\x \in A)$. We can now describe the hitting distribution on the boundary of $Sol$, which we call the harmonic measure on the boundary associated to the random walk of increment law $\m$.

\begin{theorem}\label{boundary}
If $\a>0$, the harmonic measure is given by the measure $\n$ on $\partial^+Sol = \R$ and gives no mass to $\partial^-Sol$. If $\a<0$, the harmonic measure is given by the measure $\n'$ on $\partial^-Sol = \R$ and gives no mass to $\partial^+Sol$.
\end{theorem}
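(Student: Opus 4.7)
The strategy is, under the assumption $\a>0$, to show that the trajectory $W_n=(S_n,U_n,V_n)$ remains at sublinear Riemannian distance from an upward vertical geodesic ray with horizontal coordinate exactly $\x$; by the definition of the visual topology of $\partial Sol$ via equivalence classes of geodesic rays, this forces $W_n$ to converge to the boundary point $[\g^+_{\x,0}]\in\partial^+Sol$, and the distribution of this almost sure limit $\x$ is exactly $\n$. The case $\a<0$ follows by a symmetric argument, swapping the $x$- and $y$-coordinates and reversing the sign of $z$.

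First, apply the law of large numbers to the i.i.d. $z$-increments: $S_n/n\to\a$ almost surely, hence $S_n\to+\infty$, which already rules out the random walk hitting any class of downward vertical rays and shows the harmonic measure vanishes on $\partial^-Sol$. Proposition \ref{xsi} provides the almost sure convergence $U_n\to\x$, thereby producing the candidate limit point. What remains is to verify that the pair $(U_n,V_n)$ does not spoil the convergence: one must bound the Riemannian distance from $W_n$ to the point $(S_n,\x,0)$ lying on the geodesic ray $\g^+_{\x,0}$.

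By left-invariance, $d(W_n,(S_n,\x,0))=d\bigl(id,(0,\,e^{S_n}(\x-U_n),\,-e^{-S_n}V_n)\bigr)$, and Lemma \ref{exp-distortion} controls this by a constant multiple of $\log\bigl(1+e^{S_n}|\x-U_n|+e^{-S_n}|V_n|\bigr)$. Write $\x-U_n=\sum_{j>n}x_je^{-S_{j-1}}$ so that
$$e^{S_n}|\x-U_n|\le\sum_{j>n}|x_j|\,e^{-(S_{j-1}-S_n)}.$$
Using $|x_j|=e^{o(j)}$ from Lemma \ref{ffm} together with $S_{j-1}-S_n\ge(\a-\e)(j-1-n)$ eventually by the law of large numbers, the sum is dominated by a convergent geometric series, yielding $\log\bigl(e^{S_n}|\x-U_n|\bigr)=o(n)$. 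Similarly $|V_n|\le\sum_{j\le n}|y_j|e^{S_{j-1}}\le e^{(\a+o(1))n}$, giving $\log\bigl(e^{-S_n}|V_n|\bigr)=o(n)$. Combining, $d(W_n,(S_n,\x,0))=o(n)$, and since $S_n\sim\a n\to\infty$, this is precisely the condition that $W_n$ converges to $[\g^+_{\x,0}]$ in the cone topology of $Sol\cup\partial Sol$.

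The main obstacle is the handling of the $y$-coordinate: $V_n$ itself grows exponentially like $e^{\a n}$, and one must check that the compensating factor $e^{-S_n}$ tames it to subexponential size, which in turn requires sufficiently tight simultaneous use of the law of large numbers on $S_n$ and the moment-type estimate $|y_j|=e^{o(j)}$ from Lemma \ref{ffm}. Once the two horizontal displacements are shown to have logarithms of order $o(n)$, the exponential distortion of the horizontal plane (Lemma \ref{exp-distortion}) absorbs them into an $o(n)$ Riemannian correction, and the identification of the harmonic measure with the distribution $\n$ of $\x$ follows immediately.
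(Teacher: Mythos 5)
Your proposal is correct and follows essentially the same route as the paper: the paper proves Theorem \ref{LLN} by exactly the same left-invariance reduction to a point of the horizontal plane, then the same combination of Lemma \ref{exp-distortion}, Lemma \ref{ffm} and the law of large numbers to get the $o(n)$ bound, the only cosmetic difference being that you compare $W_n$ directly to $\g^+_{\x,0}(S_n)$ rather than to $g^n$ with $g=(0,\x,0)(\a,0,0)(0,-\x,0)$, which is itself at bounded distance from $\g^+_{\x,0}(n\a)$. (Your intermediate claim $S_{j-1}-S_n\ge(\a-\e)(j-1-n)$ uniformly in $j>n$ is slightly loose; the clean version, as in the paper, bounds $S_n\le(\a+\e)n$ and $S_{j-1}\ge(\a-\e)(j-1)$ separately, which yields the same $e^{O(\e n)}$ bound.)
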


By abuse of langage, we call the measures $\n$ or $\n'$ on $\R$ the harmonic measure of the random walk. This theorem follows from the global law of large numbers on solvable Lie groups stated as Theorem 4.2 in \cite{Ksolv}. In our setting:

\begin{theorem}[Kaimanovich \cite{Ksolv}]\label{LLN}
If $\a>0$, set $g=(0, \x, 0)(\a, 0,0)(0, -\x, 0)$. If $\a<0$, set $g=(0, 0, \x')(\a, 0,0)(0, 0, -\x')$. If $\a=0$, set $g=id$. In all cases:
$$
\lim_{n \to \infty}\frac{1}{n}d(g^{n}, W_{n}) = 0, \ \ \ \ \P\text{-a.s.}
$$
\end{theorem}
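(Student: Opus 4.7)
The plan is to show $d(id, g^{-n} W_n) = o(n)$ almost surely, which suffices by left-invariance of the metric. I treat the case $\a > 0$; the case $\a < 0$ is symmetric under $x \leftrightarrow y$, and $\a = 0$ follows from the same argument with $g = id$ and no conjugation by $\x$ (every estimate below simply specializes).

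First I compute $g^n$ explicitly. Iterating the semidirect-product formula and using that $(0,\x,0)$ commutes with its own powers,
$$
g^n = (0, \x, 0)(n\a, 0, 0)(0, -\x, 0) = \bigl(n\a,\ \x(1 - e^{-n\a}),\ 0\bigr),
$$
whence a direct multiplication gives
$$
g^{-n} W_n = \bigl(S_n - n\a,\ e^{n\a}(U_n - \x) + \x,\ e^{-n\a} V_n\bigr) =: (z_n^*, x_n^*, y_n^*).
$$
The first coordinate satisfies $z_n^* = o(n)$ a.s.\ by the law of large numbers applied to $S_n$. For the second, I use Proposition \ref{xsi} to write $U_n - \x = -\sum_{j > n} x_j e^{-S_{j-1}}$. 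Fixing small $\e > 0$, Lemma \ref{ffm} combined with the LLN gives a (random) integer $N$ such that for all $j \geq N$, $|x_j| \leq e^{\e j}$ and $S_{j-1} \geq (\a - \e)(j-1)$. The tail is then dominated by a geometric series of ratio $e^{-(\a - 2\e)} < 1$, so $|U_n - \x| = O(e^{-(\a - 2\e) n})$ and hence $|e^{n\a}(U_n - \x)| = O(e^{O(\e) n})$; letting $\e \to 0$ yields $|x_n^* - \x| = e^{o(n)}$. The third coordinate is bounded by the same method: in $e^{-n\a} V_n = \sum_{j \leq n} y_j e^{S_{j-1} - n\a}$ each exponent is at most $-(n - j + 1)(\a - \e) + O(\e n)$ on the event $\{N \leq n\}$, giving $|y_n^*| = e^{o(n)}$.

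Finally, to convert these coordinatewise bounds into a bound on $d(id, \cdot)$, I factor
$$
(z_n^*, x_n^*, y_n^*) = (z_n^*, 0, 0)(0,\ e^{z_n^*} x_n^*,\ e^{-z_n^*} y_n^*),
$$
so by the triangle inequality and Lemma \ref{exp-distortion},
$$
d(id, g^{-n} W_n) \leq |z_n^*| + 4\log\bigl(1 + \|(e^{z_n^*} x_n^*,\ e^{-z_n^*} y_n^*)\|_H\bigr).
$$
Since $|z_n^*| = o(n)$, the factors $e^{\pm z_n^*}$ are $e^{o(n)}$, the horizontal norms remain $e^{o(n)}$, and the logarithm is $o(n)$; dividing by $n$ concludes. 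The main obstacle is the uniform tail estimate: passing from the pointwise LLN for $S_n$ to the uniform lower bound $S_{j-1} \geq (\a - \e)(j-1)$ for all $j \geq N$ is what lets the cross-term $e^{n\a}(U_n - \x)$ be absorbed into $e^{o(n)}$, and without this uniformity the conjugation of $g$ by $(0,\x,0)$ would not produce the right cancellation.
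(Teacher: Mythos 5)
Your proposal is correct and follows essentially the same route as the paper: you compute $g^{-n}W_n=(S_n-n\a,\ e^{n\a}(U_n-\x)+\x,\ e^{-n\a}V_n)$, split off the vertical part by the triangle inequality, and control the horizontal part via Lemma \ref{exp-distortion} together with the uniform-in-$j$ law of large numbers bounds from Lemma \ref{ffm}, exactly as in the paper (your $(0,e^{z_n^*}x_n^*,e^{-z_n^*}y_n^*)$ is literally the paper's $(0,e^{-n\a+S_n}(\x-e^{n\a}(\x-U_n)),e^{-S_n}V_n)$). No gaps.
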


Theorem \ref{LLN} implies Theorem \ref{boundary}. Indeed if $\a>0$, consider the Borel measurable map from $\O$ to $\partial^+Sol$ given by $\o \mapsto \g^+_{\x(\o),0}$. Since $d(g^n,\g^+_{\x(\o),0}(n \a))=d(id,(0,\x,0))<\infty$, we have $\P$-a.s. that $d(W_n,\g^+_{\x(\o),0}(n \a))=o(n)$, showing that the random walk ${W_n}$ behaves asymptotically as the vertical geodesic $\g^+_{\x,0}$ (whereas non equivalent vertical geodesics diverge linearly $d(\g^+_{x_0,0}(t),\g^+_{x_1,0}(t)) \sim t$ when $x_0 \neq x_1$). The case $\a<0$ is similar.

\begin{proof}[Proof of Theorem \ref{LLN}]
By Proposition \ref{xsi}, the map $\o \mapsto g(\o)$ from the space of sample paths $\O$ to $Sol$ is measurable. We treat the case $\a>0$. We have 
\begin{align*}
d(g^{n}, W_{n})  &\le d(id,(-n\a+S_n,0,0)) \\
&+d((-n\a+S_n,0,0),(-n\a+S_n,\x-e^{n\a}(\x-U_{n}),e^{-n\a}V_{n})).
\end{align*}
The first term of the right hand side equals to $|-n\a + S_{n}|$, which is $o(n)$, $\P$-a.s.
By left-invariance of the metric, the second one is equal to $d(id, (0, e^{-n\a + S_{n}}(\x-e^{n\a}(\x-U_{n})), e^{-S_{n}}V_{n}))$,
which is bounded from above by 
$$
4\log \left(\left|e^{-n\a+S_{n}}\x-e^{S_{n}}\sum_{j=n+1}^{\infty}x_{j}e^{-S_{j-1}}\right|+1\right) + 4\log \left(\left|e^{-S_{n}}\sum_{j=1}^{n}y_{j}e^{S_{j-1}}\right|+1\right),
$$
using Lemma \ref{exp-distortion} and the inequality $\log(|x| +|y|+1) \le \log(|x|+1)+\log(|y|+1)$.
For any $n \ge N$, the first term and the second one are less than or equal to 
$2\log\left(|\x|e^{\e n}+ Ce^{3\e n}+1\right)$
and
$2\log\left( Ce^{3\e n}+1\right)$, respectively for some constant $C>0$ which does not depend on $n$.
This shows that $d(g^{n}, W_{n})=o(n)$, $\P$-a.s.
\end{proof}

\begin{remark}
If the probability measure $\m$ with finite first moment is supported on a cocompact lattice $\G$ of the group $Sol$, then the boundary $\partial Sol$ endowed with the harmonic measure is the Poisson boundary of $(\G,\m)$. If $\a=0$, then the Poisson boundary on the cocompact lattice $\G$ with measure $\m$ is trivial. This follows from Kaimanovich ray approximation argument (see Theorem 5.5 in \cite{Khyperbolic} and Theorem 4.3 in \cite{Ksolv}).
\end{remark}

Theorem \ref{LLN} permits to compute the speed of the random walk.

\begin{corollary}\label{speed}
The speed of the random walk of law $\m$ on the group $Sol$ is the absolute value of the vertical drift $\a=\E \m_z$, that is
$$
\lim_{n \to \infty}\frac{1}{n}d(id, W_{n})=|\a|, \ \P\text{-a.s.}
$$
\end{corollary}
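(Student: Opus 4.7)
\bigskip

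\noindent\textbf{Proof plan.} My strategy is to derive the corollary from Theorem~\ref{LLN} by replacing the random walk $W_n$ with the deterministic orbit $g^n$ and then computing $d(id,g^n)$ explicitly. The triangle inequality gives
\[
\left|\,\tfrac{1}{n}d(id,W_n)-\tfrac{1}{n}d(id,g^n)\,\right|\;\le\;\tfrac{1}{n}d(g^n,W_n),
\]
and the right-hand side tends to $0$ $\P$-a.s.\ by Theorem~\ref{LLN}. Hence it suffices to show that $\frac{1}{n}d(id,g^n)\to|\a|$ $\P$-a.s. The case $\a=0$ is immediate since then $g=id$.

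\smallskip

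\noindent Now assume $\a>0$ (the case $\a<0$ is symmetric, exchanging the roles of $x$ and $y$). Writing $a=(0,\x,0)$ and $b=(\a,0,0)$, note that $a^{-1}=(0,-\x,0)$, so $g=aba^{-1}$ and therefore $g^n=ab^na^{-1}=(0,\x,0)(n\a,0,0)(0,-\x,0)$. A direct application of the product rule yields
\[
g^n\;=\;\bigl(n\a,\ \x(1-e^{-n\a}),\ 0\bigr).
\]
Since $\x=\x(\o)$ is $\P$-a.s.\ finite by Proposition~\ref{xsi}, this is a deterministic element of the $zx$-plane once we condition on the sample.

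\smallskip

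\noindent For the estimate on $d(id,g^n)$, the lower bound $d(id,g^n)\ge n\a$ is immediate from the fact that the projection $Sol\to\R$, $(z,x,y)\mapsto z$, is $1$-Lipschitz (because $ds^2\ge dz^2$). For the matching upper bound I use that the $zx$-plane is totally geodesic in $Sol$, so $d(id,g^n)$ equals the hyperbolic distance in the logarithmic model $\{(z,x)\}$ with metric $dz^2+e^{2z}dx^2$. Passing to the upper half-plane via $z=-\log\x$, the points $id$ and $g^n$ become $(0,1)$ and $\bigl(\x(1-e^{-n\a}),e^{-n\a}\bigr)$, and one checks directly via the path that first moves horizontally at height $\x=1$ (hyperbolic length $|\x|$) and then vertically down to $\x=e^{-n\a}$ (hyperbolic length $n\a$, plus at most $|\x|$ to correct the $x$-coordinate) that
\[
d(id,g^n)\;\le\;n\a+2|\x(\o)|.
\]
Dividing by $n$ and letting $n\to\infty$ gives $\frac{1}{n}d(id,g^n)\to\a=|\a|$ $\P$-a.s., which completes the argument.

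\smallskip

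\noindent The only step needing care is the upper bound for $d(id,g^n)$: the crude bound coming from Lemma~\ref{exp-distortion} alone yields only $d(id,g^n)=O(n\a)$ with a constant larger than $1$, so one really needs the totally geodesic hyperbolic structure of the $zx$-plane to obtain $d(id,g^n)=n\a+O(1)$. Beyond that, the corollary is a routine consequence of Theorem~\ref{LLN}.
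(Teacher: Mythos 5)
Your proof is correct and follows essentially the same route as the paper: reduce to computing $d(id,g^n)$ via Theorem~\ref{LLN} and the triangle inequality, note $g^n=(n\a,\x(1-e^{-n\a}),0)$, and sandwich $n\a\le d(id,g^n)\le n\a+O(1)$. One remark, though: your closing claim that Lemma~\ref{exp-distortion} is insufficient and that one \emph{needs} the totally geodesic hyperbolic structure of the $zx$-plane is not accurate. The paper writes $g^n=(0,\x(1-e^{-n\a}),0)\cdot(n\a,0,0)$ and applies the triangle inequality, so the horizontal factor has \emph{bounded} Euclidean norm $|\x|(1-e^{-n\a})\le|\x|$ and Lemma~\ref{exp-distortion} contributes only the additive constant $4\log(|\x|+1)$, giving $d(id,g^n)\le n\a+O(1)$ directly; the crude multiplicative loss you worry about would only arise if one applied the lemma to an unbounded horizontal displacement. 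Your alternative upper bound via an explicit path in the totally geodesic $zx$-plane is equally valid (and in fact slightly sharper), so this is a matter of presentation rather than a gap.
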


\begin{proof}
We treat the case $\a>0$. By Lemma \ref{exp-distortion} and the triangular inequality,
$n \a \le d(id, g^{n}) \le n \a + 4\log(\|(\x-e^{-n\a}\x, 0)\|_{H}+1)$, thus $d(id, g^{n}) \sim \a n$, and by Theorem \ref{LLN}, $d(id, W_{n}) \sim \a n$.
\end{proof}

\subsection{Elementary properties of the harmonic measure}

The group $Sol$ acts affinely on the boundary by $g.\x=x+e^{-z}\x$ for $\x \in \partial^+Sol$ and $g.\x' = y+e^z\x'$ for $\x' \in \partial^-Sol$, with notation $g=(z,x,y)$. By symmetry in Theorem \ref{boundary}, we restrict our considerations to the case $\a>0$ and the subset $\R=\partial^+Sol$ of the boundary. The measure $\n$ is pushed by action of $g$ to the measure $g\n$ given by $g\n(A)= \n(g^{-1}.A)=\P(g.\x \in A)$. 

The harmonic measure $\n$ is $\m$-stationary, i.e. satisfies $\n=\int_{g \in Sol}g\n d\m(g)$.
In fact, the harmonic measure $\n$ is the unique such probability measure on $\R=\partial^+Sol$. It also has a law of pure type, i.e. either absolutely continuous or completely singular with respect to Lebesgue measure.

\begin{proposition}\label{unique-harm}
Assume that $\m$ satisfies $\a=\E \m_{z}>0$.
Then the harmonic measure $\n$ is the unique $\m$-stationary probability measure on $\R=\partial^+Sol$.
Moreover, the harmonic measure $\n$ is either absolutely continuous or completely singular with respect to Lebesgue measure.
\end{proposition}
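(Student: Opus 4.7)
The plan is to prove the two assertions separately, both exploiting the affine nature of the boundary action $g.\xi = x + e^{-z}\xi$ and the fact that $\alpha>0$ forces the dilation factor $e^{-S_n}$ to decay geometrically along $\P$-a.e.\ sample path.

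\textbf{Uniqueness.} Let $\lambda$ be any $\mu$-stationary probability measure on $\R$. Iterating the stationarity equation $\lambda = \int g\lambda \, d\mu(g)$ and using Fubini, one obtains, for every bounded continuous test function $f$,
\begin{equation*}
\int_\R f\,d\lambda \;=\; \E\!\left[\int_\R f(W_n.\xi)\,d\lambda(\xi)\right] \;=\; \E\!\left[\int_\R f(U_n + e^{-S_n}\xi)\,d\lambda(\xi)\right].
\end{equation*}
By the law of large numbers $S_n/n \to \alpha > 0$, so $e^{-S_n}\to 0$ $\P$-a.s., while by Proposition \ref{xsi}, $U_n \to \xi(\omega)$ $\P$-a.s. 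For each fixed $\xi \in \R$, the integrand $f(U_n+e^{-S_n}\xi)$ therefore converges to $f(\xi(\omega))$. Applying dominated convergence first in $\xi$ (with respect to $\lambda$) and then in $\omega$ (with respect to $\P$), and using the definition $\nu(A) = \P(\xi \in A)$, yields
\begin{equation*}
\int_\R f\,d\lambda \;=\; \E\!\bigl[f(\xi)\bigr] \;=\; \int_\R f\,d\nu.
\end{equation*}
As $f$ was arbitrary, $\lambda = \nu$. The main technical point here is simply the justification of the two dominated convergences; boundedness of $f$ and the fact that $\lambda$ and $\P$ are finite make them routine.

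\textbf{Pure type.} Write the Lebesgue decomposition $\nu = \nu_{\mathrm{ac}} + \nu_{\mathrm{s}}$ with respect to Lebesgue measure on $\R$. The affine maps $\xi \mapsto x + e^{-z}\xi$ with $e^{-z} \neq 0$ preserve the Lebesgue measure class, so for every $g \in Sol$ the pushforward $g\nu_{\mathrm{ac}}$ is absolutely continuous and $g\nu_{\mathrm{s}}$ is singular. Integrating against $\mu$ and invoking the uniqueness of the Lebesgue decomposition, the stationarity identity $\nu = \int g\nu\, d\mu(g)$ splits into
\begin{equation*}
\nu_{\mathrm{ac}} \;=\; \int g\nu_{\mathrm{ac}}\,d\mu(g), \qquad \nu_{\mathrm{s}} \;=\; \int g\nu_{\mathrm{s}}\,d\mu(g).
\end{equation*}
If both $\nu_{\mathrm{ac}}$ and $\nu_{\mathrm{s}}$ were nonzero, then after normalization each would be a $\mu$-stationary probability measure, hence by the uniqueness just proved each would coincide with $\nu$; but then $\nu$ would be simultaneously absolutely continuous and singular, a contradiction. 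Thus one of the two summands vanishes, giving the dichotomy.

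\textbf{Expected difficulty.} Nothing in the argument is deep; the only place requiring mild care is the double dominated convergence in the uniqueness step, where one has to make sure that the pointwise convergence $f(U_n + e^{-S_n}\xi) \to f(\xi)$ holds $\lambda$-a.e.\ for $\P$-a.e.\ sample path — which follows immediately from $e^{-S_n}\to 0$ being a statement about $\omega$ alone, independent of $\xi$. The pure-type step is then a formal consequence of uniqueness together with the fact that $Sol$ acts by Lebesgue-class preserving affine maps on $\partial^+Sol$.
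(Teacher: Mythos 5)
Your proposal is correct and follows essentially the same route as the paper: the uniqueness part is the same double dominated-convergence argument showing $W_n\lambda\to\delta_{\xi}$ weakly $\P$-a.s.\ and hence $\lambda=\int_{\O}W_n\lambda\,d\P\to\nu$, and the pure-type part is the same splitting of the Lebesgue decomposition using that the affine boundary action preserves the Lebesgue measure class, followed by uniqueness. No issues.
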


\begin{proof}
Suppose that $\l$ is a $\m$-stationary probability measure on $\R$.
First, we will show the uniqueness; $\l=\n$.
Note that a sequence of measures $W_{n}\l$ on $\R$ converges to the point measure $\d_{\x}$ weakly, $\P$-a.s.,
since for any bounded continuous function $f : \R \to \R$,
$$
\int_{\R}f(x)dW_{n}\l(x)=\int_{\R}f(x_{1}+x_{2}e^{-S_{1}}+\cdots + x_{n}e^{-S_{n-1}}+e^{-S_{n}}x)d\l(x) \to f(\x),
$$
$\P$-a.s., by the Lebesgue dominated convergence theorem.
Here $\x=\sum_{j=1}^{\infty}x_{j}e^{-S_{j-1}}$, $\P$-a.s.
Now $\l$ is $\m$-stationary; by induction, for any $n$,
$
\l=\int_{Sol}g\l d\m^{\ast n}(g)=\int_{\O}W_{n}\l d\P.
$
Since $W_{n}\l$ converges to $\d_{\x}$ weakly, $\P$-a.s., again by the Lebesgue dominated convergence theorem,
$\int_{\O}W_{n}\l d\P$ converges to $\int_{\O}\d_{\x}d\P=\n$ weakly, and thus $\l=\n$.

Next, to prove that $\n$ is either absolutely continuous or singular with respect to Lebesgue measure, take the Lebesgue decomposition $\n=\n_{ac} + \n_{s}$, where $\n_{ac}$ (resp. $\n_{s}$) is the absolutely continuous (resp. singular) part.
Also take the Lebesgue decomposition $g\n=(g\n)_{ac}+(g\n)_{s}$ for $g$ in $Sol$.
Here we have $(g\n)_{ac}+(g\n)_{s}=g\n_{ac}+g\n_{s}$.
Since for any Lebesgue measure $0$ set $C$ and for any $g$ in $Sol$, $g.C$ has also the Lebesgue measure $0$,
$(g\n)_{ac}$, $g\n_{ac}$ are absolutely continuous with respect to Lebesgue measure and $(g\n)_{s}$, $g\n_{s}$ are mutually singular with Lebesgue measure.
Therefore, $(g\n)_{ac}=g\n_{ac}$ and $(g\n)_{s}=g\n_{s}$.
Now $\n$ is $\m$-stationary,
$\n=\int_{Sol}g\n d\m(g)$, and it follows that 
$\n_{ac}=\int_{Sol}(g\n)_{ac}d\m(g)$ and
$\n_{s}=\int_{Sol}(g\n)_{s}d\m(g)$.
The equalities $(g\n)_{ac}=g\n_{ac}$ and $(g\n)_{s}=g\n_{s}$ imply that $\n_{ac}$ and $\n_{s}$ are $\m$-stationary.
By the uniqueness of $\m$-stationary probability measure, if neither $\n_{ac}$ nor $\n_{s}$ is $0$,
then $\n_{ac}$ and $\n_{s}$ are equal up to normalization.
This contradicts.
Hence either $\n_{ac}$ or $\n_{s}$ has to be $0$.
\end{proof}

We focus on the case where $\m$ has countable support, generating (as a semi-group) a countable subgroup $\G$ of $Sol$. The inherited action of $\G$ on $\R=\partial^+Sol$ is non-elementary if each orbit contains infinitely many points, i.e. for each $p \in \R$, $\#\{g.p \ | \ g \in \G\}=\infty$.

\begin{proposition}\label{non-atomic}
If the countable subgroup $\G$ of $Sol$ generated by $Supp(\m)$ acts non-elementary on $\R$, then the harmonic measure $\n$ is non-atomic.
\end{proposition}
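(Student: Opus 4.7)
The plan is to argue by contradiction: assume $\n$ has at least one atom, extract from the set of atoms of largest mass a finite $\G$-invariant subset of $\R$, and contradict properness of the action of $\G$ on $\R$.

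First I would set $m=\sup_{p\in\R}\n(\{p\})>0$ and $A=\{p\in\R:\n(\{p\})=m\}$. Since $\n$ is a probability measure one has $m\cdot|A|\le 1$, so $A$ is finite and nonempty. The key step is to prove that $A$ is $\G$-invariant. For this I would apply the stationarity identity $\n=\m\ast\n$ at a point $p\in A$, which reads
\[
m=\n(\{p\})=\sum_{g\in\mathrm{Supp}(\m)}\m(g)\,\n(\{g^{-1}.p\}).
\]
Every summand is bounded above by $m$, so equality forces $\n(\{g^{-1}.p\})=m$, i.e.\ $g^{-1}.p\in A$, for every $g$ in $\mathrm{Supp}(\m)$. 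Since $\n$ is also $\m^{\ast n}$-stationary for every $n\ge 1$, the same argument applied to $\m^{\ast n}$ extends this to $s^{-1}.p\in A$ for every $s$ in the semigroup $S$ generated by $\mathrm{Supp}(\m)$; thus $S^{-1}.A\subseteq A$.

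Next I would upgrade $S^{-1}$-invariance of $A$ to $\G$-invariance. Each $g=(z,x,y)\in\G$ acts on $\R=\partial^+Sol$ by the affine homeomorphism $\x\mapsto x+e^{-z}\x$, which is injective. Hence, for any $s\in S$, the restriction of $s^{-1}$ to $A$ is an injection of a finite set into itself, and therefore a bijection of $A$; its set-theoretic inverse is the restriction of $s$ to $A$. This gives $s.A\subseteq A$ as well, so $A$ is stable under both $S$ and $S^{-1}$, and therefore under the group $\G$ they generate.

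Finally, $A$ being finite, nonempty and $\G$-invariant, any point $p\in A$ has its entire $\G$-orbit contained in $A$ and in particular finite, contradicting the assumption that $\G$ acts properly on $\R$. Consequently $\n$ can have no atom. The only subtle point I anticipate is the passage from $S^{-1}$-invariance to $\G$-invariance, which uses crucially both the finiteness of $A$ (so that injective self-maps are bijections) and the fact that $\G$ acts by homeomorphisms of the boundary; the rest is the standard maximal-atom argument for stationary measures.
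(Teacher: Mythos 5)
Your proof is correct and follows essentially the same route as the paper: stationarity forces every atom of maximal weight $m$ to propagate along the action, and since there can be at most $1/m$ such atoms, some orbit is finite, contradicting properness. The one place you go beyond the paper is the passage from invariance under the semigroup generated by $Supp(\m)$ to invariance under the full group $\G$ (via finiteness of $A$ and injectivity of the affine action); the paper compresses this into ``by induction, for any $g\in\G$'', so your extra step is a legitimate and welcome clarification rather than a detour.
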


\begin{proof}
Suppose that $\n$ has atoms on $\R$. Let $m>0$ be the maximal weight of atoms and $p$ a point which has $\n(p)=m$. Since $\n$ is $\m$-stationary, $m=\n(p)=\sum_{g}\m(g)g\n(p)$, hence for $g \in Supp(\m)$, $\n(g^{-1}.p)=m$, and by induction, for any $g \in \G$, $\n(g^{-1}.p)=m$.
This is impossible if we assume that $\G$ acts on $\R$ non-elementary.
\end{proof}

We give a simple criterion for a countable group to have a non-elementary action at the boundary. For $g=(z,x,y)$ in $Sol$ with $z \neq 0$, set:
$$p^+(g)=\frac{x}{1-e^{-z}} \textrm{ and } p^-(g)=\frac{y}{1-e^{z}}. $$

\begin{fact}\label{pp}
If $p^+(g') \neq p^+(g)$ (resp. $p^-(g') \neq p^-(g)$), the group generated by $g,g'$ and their inverses has a non-elementary action on $\partial^+Sol$ (resp. $\partial^-Sol$).
\end{fact}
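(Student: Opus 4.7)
The plan is to recognize $p^+(g)$ as the unique fixed point of the affine action of $g$ on $\partial^+Sol = \R$. Since $g = (z,x,y)$ acts by $\xi \mapsto e^{-z}\xi + x$, and the definition of $p^+(g)$ already requires $z \neq 0$, the multiplier $e^{-z}$ differs from $1$; the fixed-point equation $\xi = x + e^{-z}\xi$ then admits the unique solution $\xi = x/(1 - e^{-z}) = p^+(g)$.

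Next I would compute the iterates. A short induction (or equivalently, conjugation by the translation sending $p^+(g)$ to $0$) gives
$$g^n.\xi - p^+(g) = e^{-nz}\bigl(\xi - p^+(g)\bigr) \quad \text{for every } n \in \Z.$$
Because $z \neq 0$, the scalars $\{e^{-nz}\}_{n \in \Z}$ are pairwise distinct, so every $\xi \neq p^+(g)$ has an infinite $\langle g\rangle$-orbit in $\R$. The analogous statement holds for $g'$ with fixed point $p^+(g')$.

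To finish, let $q \in \R$ be arbitrary. The hypothesis $p^+(g) \neq p^+(g')$ forces $q$ to differ from at least one of these two points; accordingly either $\langle g \rangle.q$ or $\langle g' \rangle.q$ is already infinite, and therefore so is the orbit of $q$ under the group $\G$ generated by $g$, $g'$ and their inverses. This is precisely the properness condition. The case of $\partial^-Sol$ is symmetric, using the affine action $\xi' \mapsto e^z\xi' + y$ with unique fixed point $p^-(g) = y/(1 - e^z)$. The only point requiring any attention is the built-in hypothesis $z \neq 0$ for both $g$ and $g'$, without which $p^\pm$ would not be defined in the first place; there is no genuine obstacle.
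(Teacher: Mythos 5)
Your proof is correct and follows essentially the same route as the paper: both identify $p^+(g)$ as the unique fixed point of the affine action, derive the iterate formula $g^n.\xi = e^{-nz}(\xi - p^+(g)) + p^+(g)$ to conclude that every $\xi \neq p^+(g)$ has infinite $\langle g\rangle$-orbit, and then use the hypothesis $p^+(g)\neq p^+(g')$ to handle the remaining point via $g'$. Your explicit remark that the pairwise distinctness of $\{e^{-nz}\}_{n\in\Z}$ rests on $z\neq 0$ (which is built into the definition of $p^{\pm}$) is a welcome clarification of a point the paper leaves implicit.
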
 

\begin{proof}
The affine action on $\partial^+Sol$ of powers of $g$ is given by $g^n(\x)=e^{-nz}(\x+\frac{x}{e^{-z}-1})+\frac{x}{1-e^{-z}}$, so the $g$-orbit of any point $p \in \R \setminus \{p^+(g)\}$ is infinite. The hypothesis ensures that $p^+(g)$ has an infinite $g'$-orbit.
\end{proof}

Here is a criterion ensuring that the harmonic measure has full support in the boundary. Recall from Section \ref{RWonSol} that the measure $\m$ is said to give independance to the $z$ and $x$-components if its projection onto the $zx$-plane is a product measure $\pi_\ast \m=\m_z \times \m_x$ .

\begin{proposition}
Assume that $\m$ gives independence to the $z$ and $x$-components and that $\a=\E\m_z>0$,  $\m_x(\R_\ast^+)>0$, $\m_x(\R_\ast^-)>0$, $\m_z(\R_\ast^+)>0$, $\m_z(\R_\ast^-)>0$, $\m_z(0)>0$, then the group $\G$ generated by $Supp(\m)$ acts non-ementarily on $\partial^+Sol=\R$ and the harmonic measure has full support $Supp(\n)=\R$.
\end{proposition}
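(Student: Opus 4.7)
The plan splits into the two assertions. For the proper action of $\G$ on $\R = \partial^+ Sol$, I would apply Fact \ref{pp}: using $\m_z(\R_\ast^+)>0$ and $\m_x(\R_\ast^\pm)>0$, pick $z^\ast \in Supp(\m_z) \cap \R_\ast^+$ and $x^\pm \in Supp(\m_x) \cap \R_\ast^\pm$. The independence assumption $\pi_\ast \m = \m_z \times \m_x$ places both $(z^\ast, x^+)$ and $(z^\ast, x^-)$ in $Supp(\pi_\ast \m)$, so the corresponding group elements $g, g' \in Supp(\m) \subseteq \G$ satisfy $p^+(g) = x^+/(1-e^{-z^\ast}) > 0$ and $p^+(g') = x^-/(1-e^{-z^\ast}) < 0$. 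These differ, and Fact \ref{pp} yields properness.

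For the full-support claim, my first observation is that $Supp(\n)$ is invariant under the forward action of the affine semigroup $\Sigma$ generated by $Supp(\m)$ on $\R$: if $p \in Supp(\n)$ and $g \in Supp(\m)$, then for any neighborhood $V$ of $g.p$ the set $g^{-1} V$ is a neighborhood of $p$, so $\n(g^{-1} V) > 0$, and the $\m$-stationarity gives $\n(V) \ge \m(\{g\})\, \n(g^{-1} V) > 0$. Since $\m$ has discrete support, $\m(\{g\}) > 0$, and by induction $\Sigma.p \subseteq Supp(\n)$ for every $p \in Supp(\n)$.

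Next, I exploit independence together with $\m_z(0)>0$ to extract three specific affine maps in $\Sigma$: the translations $T_\pm : \xi \mapsto \xi + x^\pm$ (coming from atoms with $z = 0$) and the contraction $S : \xi \mapsto x^+ + \l \xi$ with $\l := e^{-z^\ast} \in (0,1)$. Since $S^n.p$ converges to the fixed point $p^+ := x^+/(1-\l)$ for any $p$, and $Supp(\n)$ is closed and nonempty, $p^+ \in Supp(\n)$. In the displacement coordinate $a := \xi - p^+$, the maps $T_\pm$ and $S$ take the simple form $a \mapsto a + x^\pm$ and $a \mapsto \l a$ respectively. Hence the forward orbit $A$ of $0$ in the displacement picture contains both the additive sub-semigroup $\{n x^+ + m x^- : n, m \ge 0,\ n+m \ge 1\}$ and $\l^k A$ for every $k \ge 0$.

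The crux is to verify $\overline A = \R$, and I expect this to be the only mildly delicate point. If $x^+/|x^-|$ is irrational, the sub-semigroup $\{n x^+ + m x^- : n, m \ge 0\}$ is already dense in $\R$ by a standard equidistribution argument. In the rational case, a short B\'ezout computation shows that this sub-semigroup equals a lattice $d\Z$ for some $d > 0$, and then $A \supseteq \bigcup_{k \ge 0} \l^k d\Z$ is dense because the lattice spacings $\l^k d$ tend to $0$. Once this rational/irrational dichotomy is handled, $Supp(\n) \supseteq \overline{p^+ + A} = p^+ + \overline A = \R$, forcing the desired equality.
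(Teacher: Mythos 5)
Your proof is correct, but it is organized differently from the paper's. For properness you invoke Fact~\ref{pp} with the two elements $(z^\ast,x^+,\cdot)$ and $(z^\ast,x^-,\cdot)$, whose fixed points $x^\pm/(1-e^{-z^\ast})$ have opposite signs; the paper never appeals to Fact~\ref{pp} here and instead extracts properness at the very end, from the observation that every orbit is dense. For full support, both arguments rest on the same mechanism --- $Supp(\n)$ is forward-invariant under the semigroup generated by $Supp(\m)$, and one combines the contraction supplied by a positive-$z$ atom with the two-sided translations supplied by the $z=0$ atoms --- but you run it as orbit dynamics while the paper runs it as a direct measure estimate. You first locate the attracting fixed point $p^+=x^+/(1-\l)$ inside $Supp(\n)$ and then prove density of its semigroup orbit, which forces the rational/irrational dichotomy for $x^+/|x^-|$. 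The paper sidesteps all number theory by ordering the moves the other way: it first applies positive-$z$ steps until the conjugated translation lengths $x^\pm e^{-S_k}$ are below $\e$, then uses the oscillation trick (add $x^+e^{-S_k}$ when below the target, $x^-e^{-S_k}$ when above) to steer $U_n$ within $\e$ of any prescribed point, and starts not from a single support point but from an interval $[-t,t]$ with $\n([-t,t])>0$, yielding the explicit bound $\n([\x-2\e,\x+2\e])\ge \P(X_1\cdots X_n)\,\n([-t,t])>0$. Your dichotomy is in fact avoidable within your own framework: the same oscillation argument shows $\{nx^++mx^-: n,m\ge 0\}$ is $\max(x^+,|x^-|)$-dense in $\R$ with no case analysis, so $\bigcup_{k\ge 0}\l^k\{nx^++mx^-\}$ is dense outright. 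On balance, your version isolates the affine dynamics (fixed point, displacement coordinate, scaling semigroup) more transparently, while the paper's version is shorter and delivers a quantitative lower bound on the $\n$-measure of intervals rather than just nonvanishing.
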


\begin{proof}
Since $\n(\R)=1$, there exists $t$ such that $\n([-t,t])>0$. Given $\x \in \R$ and $\e>0$, set $z \in \R$ such that $e^{-z}t\leq \e$, and choose $X_1,\dots, X_k$ in $Supp(\m)$ such that $S_k=z(X_1\dots X_k)\geq z$. 
Then choose $X_{k+1},\dots ,X_n$ such that $z_{k+1}=\dots=z_n=0$ and $|\x-U_n|\leq \e$, where $U_n=\sum_{j=1}^n x_je^{-(z_1+\dots+z_{j-1})}$.
Now if $\x_\infty=\x(X_{n+1}X_{n+2}\dots)$ has law $\n$, then $\x(X_1\dots X_nX_{n+1}\dots)=U_n+e^{-S_k}\x_\infty$ belongs to $[x-2\e,x+2\e]$ as soon as $\x_\infty\in [-t,t]$. This shows $\n([x-2\e,x+2\e])\geq \P(X_1\dots X_n) \n([-t,t])>0$. In particular, the orbit of any $\x_\infty$ is dense in $\R$.
\end{proof}

\section{Singular harmonic measures}\label{sing}

In this section, we provide an upper bound for the Hausdorff dimension of the harmonic measure in terms of entropy and speed of the random walk. We deduce a sufficient criterion for singularity of this measure, and deduce that any countable subgroup of $Sol$ admits a probability measure $\m$ such that the associated random walk has a harmonic measure singular with respect to Lebesgue measure on the boundary.

\subsection{Upper bound on the Hausdorff dimension}

The Hausdorff dimension of a measure $\n$ on $\R$ is defined by 
$$
\dim \n:=\inf\{\dim_{H}A \ | \ A \subset \R, \n(A^{\mathsf C})=0\},
$$
where $\dim_{H}A$ denotes the Hausdorff dimension of the set $A$.

The Shannon entropy of the countable supported probability measure $\m$ is the quantity $H(\m):=-\sum_{s}\m(s)\log\m(s)$. When it is finite, the entropy of the random walk of law $\m$ is
$$
h_\m=\lim_{k \rightarrow \infty} \frac{H(\m^{\ast k})}{k}=\inf_k  \frac{H(\m^{\ast k})}{k}.
$$
The entropy of the random walk measures how fast the convolution measures $\m^{\ast k}$ diffuse the mass in the countable group (see for instance \cite{KV} about entropy on countable groups).

The speed of the random walk is the quantity $\lim_{n \to \infty}\E d(id,W_n)/n$ measures the expected value of the distance in $Sol$ between the random walk and its starting point. By Corollary \ref{speed}, the speed of the random walk of law $\m$ on the group $Sol$ is equal to $|\a|=|\E \m_z|$.

The Hausdorff dimension of the harmonic measure can be bounded from above in terms of entropy and speed by the:

\begin{theorem}\label{Cantor}
Let $\m$ be a countably supported probability measure on $Sol$ with finite first moment and finite entropy and such that $\a=\E \m_z \neq 0$. Let $\n$ be the harmonic measure on $\partial Sol$ corresponding to the pair $(\G,\m)$. Then
$$
\dim \n \le \frac{h_\m}{|\a|}.
$$
\end{theorem}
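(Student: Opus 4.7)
By Theorem \ref{boundary} and the symmetry between $\partial^+Sol$ and $\partial^-Sol$, the plan is to treat the case $\a>0$, so that $\n$ is a probability measure on $\partial^+Sol=\R$. Following the strategy of Kaimanovich--Le Prince \cite{KLeP}, I would cover a set of nearly full $\n$-measure by an economical family of intervals whose combinatorics are dictated by the entropy of $\m$ and whose length by the speed $\a$. The engine is the cocycle decomposition implicit in \eqref{prodlaw}: for each $n$,
$$\x = U_n + e^{-S_n}\,\x^{(n)}, \qquad \x^{(n)} := \sum_{j>n} x_j\,e^{-(S_{j-1}-S_n)},$$
where $\x^{(n)}$ has the same distribution $\n$ as $\x$ and is independent of $W_n=(S_n,U_n,V_n)$. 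In particular, conditional on $W_n=g$, the limit $\x$ lies in an interval of radius $\simeq e^{-S_n(g)}$ centered at $U_n(g)$.

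Fix $\e>0$. I would then assemble three high-probability events for $n$ large:
(a) by the Shannon--McMillan--Breiman theorem for the random walk (valid since $\m$ has finite Shannon entropy), there is a set $A_n \subset \G$ with $|A_n| \le e^{n(h_\m + \e)}$ and $\m^{\ast n}(A_n) \ge 1 - \e$;
(b) by the law of large numbers applied to $S_n$, $\P(S_n \ge n(\a-\e)) \ge 1 - \e$;
(c) by tightness of $\n$, there exists $M=M_\e$ with $\P(|\x^{(n)}| \le M) \ge 1-\e$.
On the intersection of these events, whose probability is at least $1-3\e$, the limit $\x$ belongs to the interval
$$I_g := \bigl[U_n(g) - Me^{-n(\a-\e)},\ U_n(g) + Me^{-n(\a-\e)}\bigr]$$
for some $g \in A_n$. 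Setting $E_n := \bigcup_{g \in A_n} I_g$, one obtains $\n(E_n) \ge 1 - 3\e$, and $E_n$ is covered by at most $e^{n(h_\m + \e)}$ intervals of length $2Me^{-n(\a-\e)}$.

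For any $s > (h_\m + \e)/(\a - \e)$,
$$\sum_{g \in A_n} (\mathrm{diam}\, I_g)^s \ \le\ (2M)^s\, e^{n[h_\m + \e - s(\a - \e)]} \ \longrightarrow\ 0,$$
so the $s$-dimensional Hausdorff measure of $F_\e := \limsup_n E_n$ vanishes and $\dim_H F_\e \le s$. The reverse Fatou inequality, valid because $\n$ is a finite measure, yields $\n(F_\e) \ge 1 - 3\e$. Taking $\e = 1/k$ and setting $F := \bigcup_k F_{1/k}$ produces a Borel set with $\n(F) = 1$ and $\dim_H F \le h_\m/\a$, which establishes $\dim \n \le h_\m/|\a|$. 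The main obstacle is justifying step (a): the cardinality bound $|A_n| \le e^{n(h_\m + \e)}$ is trivial, but showing that $A_n$ carries almost all of the $\m^{\ast n}$-mass requires the Shannon--McMillan--Breiman theorem for random walks on countable groups (Kaimanovich--Vershik), which I would invoke as a black box rather than reprove.
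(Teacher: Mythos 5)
Your argument is correct in substance but takes a genuinely different route from the paper's. You build explicit covers: the Shannon--McMillan--Breiman theorem of \cite{KV} gives at most $e^{n(h_\m+\e)}$ group elements carrying $\m^{\ast n}$-mass at least $1-\e$, the law of large numbers forces $S_n\ge n(\a-\e)$, and the identity $\x=U_n+e^{-S_n}\x^{(n)}$ with $\x^{(n)}$ distributed as $\n$ and independent of $W_n$ localizes $\x$ in intervals of length $2Me^{-n(\a-\e)}$; you then kill the $s$-dimensional Hausdorff measure of $\limsup_n E_n$ directly. The paper instead estimates the local dimension $\liminf_{r\to0}\log\n(B_r(x))/\log r$ and applies Frostman's lemma (Lemma \ref{Frostman}); crucially, it avoids the entropy theorem altogether by partitioning sample paths according to their $k$-step increment blocks, so that only the i.i.d.\ strong law for $-\log\m^{\ast k}$ and the elementary identity $h_\m=\inf_k H(\m^{\ast k})/k$ are needed, at the price of a martingale-convergence argument to compare $\P(A\cap\pP_n^k(\o))$ with $\P(\pP_n^k(\o))$ on the good event $A$. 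Your use of the exact law of the tail $\x^{(n)}$ together with tightness is a genuine simplification: it bypasses the paper's reliance on Lemma \ref{ffm} to control $\sum_{j>n}x_je^{-S_{j-1}}$. The trade-off is that you import the full entropy theorem for random walks as a black box where the paper uses only classical probabilistic ingredients.

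One small repair is needed at the very end. Since $\dim_{H}\bigl(\bigcup_{k}F_{1/k}\bigr)=\sup_{k}\dim_{H}F_{1/k}$ and your bound on $\dim_{H}F_{1/k}$ is $(h_\m+1/k)/(\a-1/k)$, the supremum over $k$ is attained at $k=1$, not in the limit, so the stated conclusion $\dim_{H}F\le h_\m/\a$ does not follow for $F=\bigcup_{k}F_{1/k}$. Take instead the tail unions $G_j=\bigcup_{k\ge j}F_{1/k}$: each $G_j$ has full $\n$-measure and satisfies $\dim_{H}G_j\le(h_\m+1/j)/(\a-1/j)$, and letting $j\to\infty$ in the infimum defining $\dim\n$ gives the claim. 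This is a one-line fix, not a gap in the method.
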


Estimation of dimension in terms of entropy and speed is classical (see for instance \cite{LSL2} for discrete subgroups of $SL(2,\C)$, \cite{KLeP} for $SL(d,\R)$ and the expository introduction of \cite{KLeP} for more information about when the above inequality holds). In the case of free groups, the Hausdorff dimension of the harmonic measure is precisely equal to the asymptotic entropy divided by the speed (see Ledrappier \cite{L}). However the inequality of Theorem \ref{Cantor} has to be strict in the case $h_\m/|\a|>1$, since the Hausdorff dimension of $\n$ cannot exceed the dimension $1$ of the boundary $\partial Sol$.

In order to estimate the dimension of $\n$,  we use the following lemma (see \cite{P}, Theorem 7.1. Chapter 2, pp. 42):

\begin{lemma}[Frostman]\label{Frostman}
Let $B_{r}(x)$ denotes the ball of radius $r$ centered at $x$. If for $\n$-a.e. $x$, the inequalities
$$
\d_{1} \le \liminf_{r \to 0}\frac{\log \n(B_{r}(x))}{\log r} \le \d_{2},
$$
hold, then $\d_{1} \le \dim \n \le \d_{2}$.
\end{lemma}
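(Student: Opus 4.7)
The argument is the classical two-sided comparison between the lower pointwise dimension of a measure and its Hausdorff dimension, and it splits naturally into two independent halves, each handled by a covering argument. I would prove the lower bound $\dim\n\ge\d_1$ via the mass distribution principle and the upper bound $\dim\n\le\d_2$ via a Vitali $5r$-covering.

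For the lower bound, fix $\e>0$ and introduce the exhaustion
$$E_k:=\{x\in\R:\n(B_r(x))\le r^{\d_1-\e}\ \text{for all}\ r\le 1/k\}.$$
The $\n$-a.e.\ hypothesis $\liminf_{r\to 0}\log\n(B_r(x))/\log r\ge\d_1$ forces every typical $x$ to lie in some $E_k$, hence $\n(\bigcup_kE_k)=1$. For any Borel set $A$ with $\n(A^{\mathsf C})=0$, pick $k_0$ with $\n(A\cap E_{k_0})>0$. For any countable cover $\{U_i\}$ of $A\cap E_{k_0}$ by sets of diameter $<1/k_0$, each $U_i$ that meets $E_{k_0}$ sits inside the ball of radius $|U_i|$ around some $x_i\in E_{k_0}$, so $\n(U_i)\le|U_i|^{\d_1-\e}$. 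Summing yields $\sum_i|U_i|^{\d_1-\e}\ge\n(A\cap E_{k_0})>0$, hence $\mathcal{H}^{\d_1-\e}(A)>0$ and $\dim_H A\ge\d_1-\e$. Taking the infimum over $A$ and sending $\e\to 0$ gives $\dim\n\ge\d_1$.

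For the upper bound, let $A$ be the $\n$-full-measure set on which the liminf is $\le\d_2$, and write $A=\bigcup_M(A\cap[-M,M])$ as a countable union of bounded pieces $A_M$. Fix $\e>0$. For each $x\in A_M$ and each $\eta>0$, the definition of liminf yields a radius $r(x)\le\eta$ with $\n(B_{r(x)}(x))\ge r(x)^{\d_2+\e}$. The Vitali covering theorem extracts a pairwise disjoint subfamily $\{B_{r_i}(x_i)\}$ with $A_M\subset\bigcup_i B_{5r_i}(x_i)$, and the disjointness (inside $\R$) gives
$$\sum_i(10r_i)^{\d_2+\e}\le 10^{\d_2+\e}\sum_i\n(B_{r_i}(x_i))\le 10^{\d_2+\e}.$$
Hence $\mathcal{H}^{\d_2+\e}_{10\eta}(A_M)\le 10^{\d_2+\e}$ uniformly in $\eta$, so $\mathcal{H}^{\d_2+\e}(A_M)<\infty$ and $\dim_H A_M\le\d_2+\e$. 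Countable stability of Hausdorff dimension and $\e\to 0$ yield $\dim\n\le\d_2$.

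The argument is pure measure-theoretic bookkeeping and presents no serious obstacle; the only care required is directional. In the lower half one covers the full-measure target \emph{from outside} by arbitrary sets and deploys the Frostman bound (mass controlled by radius), whereas in the upper half one covers \emph{from inside} using radii selected from the liminf itself, which is precisely why a Vitali-type extraction is needed to regain disjointness and make the sum $\sum_i\n(B_{r_i}(x_i))\le 1$ meaningful.
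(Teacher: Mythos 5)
Your proof is correct. Note that the paper does not prove this lemma at all: it is quoted with a citation to Pesin's book (\cite{P}, Theorem 7.1, Chapter 2), so there is no in-paper argument to compare against. What you give is the standard self-contained proof --- the mass distribution principle for the lower bound and a Vitali $5r$-covering extraction for the upper bound --- and both halves are sound, including the directional care with the sign of $\log r$ and the countability/disjointness bookkeeping; supplying it makes the paper's dimension estimate in Theorem \ref{Cantor} self-contained.
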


\begin{proof}[Proof of Theorem \ref{Cantor}] We treat the case $\a>0$.

Given two integers $n$ and $k$, we say two trajectories $\o$ and $\o'$ in $\O$ are equivalent if  $X_{ik+1}X_{ik+2}\dots X_{(i+1)k}(\o)=X_{ik+1}X_{ik+2}\dots X_{(i+1)k}(\o')$ for all $0 \le i \le n$, that is if the $(n+1)$ first $k$-step increments of the random walk coincide.
This equivalence relation defines a partition $\pP_{n}^k$ of $\O$.
Note that for $m > n$, the partition $\pP^k_{m}$ is a refinement of $\pP_{n}^k$.

Recall that $S_n$ is the vertical component of the random walk at time $n$. Given $\e>0$ and an integer $N$, define the set 
$$A_{\e}^{N}:=\{ \o \in \O \ |  \forall n \ge N, S_{n}(\o)/n \ge \a - \e \textrm{ and } |x_n| \leq e^{\e n}\}.$$
The sequence of sets $\{A_{\e}^{N}\}_{N}$ is increasing, that is, $A_{\e}^{N} \subset A_{\e}^{N+1}$ for any $N$.
By Lemma \ref{ffm} and as the law of large numbers implies $\lim_{n \to \infty}S_{n}/n = \a$ almost surely, we have $\Pb(\cup_{N}A_{\e}^{N})=1$.
Therefore, for any $\e>0$, there exists $N(\e)$ such that $\Pb(A_{\e}^{N(\e)}) \ge 1- \e$.
We write $A$ for $A_{\e}^{N(\e)}$ and take $(n+1)k > N(\e)$.

With the notations of Section \ref{RWonSol} and by Proposition \ref{xsi}, we have $\x=U_{(n+1)k}+\sum_{j>(n+1)k} x_j e^{-S_{j-1}}$. 
For $\o \in A$, this ensures
$$
|\x(\o)-U_{(n+1)k}(\o)| \leq \sum_{j>(n+1)k}e^{j\e}e^{-j(\a-\e)} \leq  Ce^{-nk(\a-2\e)}
$$ 
for some constant $C>0$.

Let $\pP_{n}^k(\o)$ be the element of the partition $\pP_{n}^k$ containing $\o$. For any $\o' \in \pP_{n}^k(\o)$, we have $U_{(n+1)k}(\o)=U_{(n+1)k}(\o')$, and so for any $\o' \in A \cap \pP_{n}^k(\o)$, we have $|\x(\o) - \x(\o')| \le 2Ce^{-nk(\a-2\e)} $. Finally, for any $\o \in A$
\begin{eqnarray}\label{nuA}\Pb(A\cap \pP_{n}^k(\o)) \le \Pb(\{\o' \in \O \ | \ |\x(\o)- \x(\o')| \le r_{n}^k\})=\n(B_{r_n^k}(\x(\o))).\end{eqnarray}

Define $r_{n}^k:=2Ce^{-nk(\a-2\e)}$, then $r_{n}^k \to 0$ as $ n \to \infty$.
By the Frostman Lemma \ref{Frostman}, our proof is reduced to give an upper bound for 
$$
\liminf_{n \to \infty}\frac{\log \n(B_{r_{n}^k}(x))}{\log r_{n}^k}, \ \ \ \  \text{$\n$-a.e. $x$}.
$$

For $r_n^k <1$, inequality (\ref{nuA}) implies that
$$
\frac{\log \n(B_{r_n^k}(\x(\o)))}{\log r_{n}^k} \le \frac{\log \Pb(A\cap \pP^k_{n}(\o))}{\log (2Ce^{-nk(\a-2\e)})}.
$$

Now the martingale convergence theorem ensures that for $\Pb$-a.e. $\o$, 
$$
\frac{\Pb(A\cap \pP^k_{n}(\o))}{\Pb(\pP^k_{n}(\o))} \underset{n \to \infty}{\longrightarrow} \Pb(A | \cF_{\infty})(\o), 
$$
where $\cF_{\infty}$ is the $\s$-algebra generated by $\{ X_{ik+1}X_{ik+2}\dots X_{(i+1)k}, i\geq 0\}$. Note that $\cF_{\infty}$ is a sub $\s$-algebra of the standard one generated by the cylinder sets in $\O$.
Here $\Pb(A | \cF_{\infty})(\o) > 0$ for $\Pb$-a.e. $\o \in A$, so we obtain
\begin{align}\label{limitinf}
\liminf_{n \to \infty}\frac{\log \Pb(A\cap \pP^k_{n}(\o))}{-nk(\a-2\e)} = \liminf_{n \to \infty} \frac{\log \Pb(\pP^k_{n}(\o))}{-nk(\a-2\e)}. 
\end{align}

On the other hand, by definition of the partition $\pP_n^k$, we have
$$
\P(\pP^k_{n}(\o))=\m^{\ast k}(X_{1}X_2\dots X_k(\o))\cdots \m^{\ast k}(X_{nk+1}X_{nk+2}\dots X_{(n+1)k}(\o)),
$$
and as $-\E\log\m^{\ast k}(X_{1}X_2\dots X_k)=H(\m^{\ast k}) < \infty$,
by the strong law of large numbers, we get
$$
-\frac{1}{n}\log\Pb(\pP^k_{n}(\o)) \underset{n \to \infty}{\longrightarrow} H(\m^{\ast k}), 
$$
$\Pb$-a.e. $\o \in \O$.
Thus the right hand side of (\ref{limitinf}) is bounded from above by $H(\m^{\ast k})/k(\a-2\e)$, $\Pb$-a.e.
Here $\e >0$ is an arbitrary positive value, and $k$ is an arbitrary integer, so
$$
\liminf_{n \to \infty}\frac{\log \n(B_{r_{n}}(x))}{\log r_{n}} \le \frac{h_\m}{\a},
$$
$\n$-a.e. $x \in \R$. This proves the Theorem.
\end{proof}

\subsection{Countable subgroups} A random walk of law $\m$ is considered non-degenerate if the semigroup generated by the support $Supp(\m)$ is in fact a subgroup of $Sol$. Theorem \ref{Cantor} permits to deduce the

\begin{corollary}\label{fingen}
Any countable subgroup $\G$ of $Sol$ not included in the horizontal plane $\R^2=\{z=0\}$ admits a non-degenerate finite first moment random walk $\m$ with harmonic measure singular on the boundary. Moreover, $\m$ can be chosen to be finitely supported when $\G$ is finitely generated.
\end{corollary}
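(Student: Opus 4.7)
The plan is to apply Theorem~\ref{Cantor}: if one can exhibit a non-degenerate finite-entropy measure $\m$ on $\G$ with $h_{\m}/|\a|<1$, then $\dim\n<1$ and hence $\n$ is singular with respect to Lebesgue. The idea is to make $|\a|=|\E\m_{z}|$ arbitrarily large by placing most of the mass on a high power $g_{0}^{N}$ of a single element $g_{0}\in\G$ with $z(g_{0})\neq 0$, while keeping the Shannon entropy bounded by a constant independent of $N$.

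Since $\G\not\subset\{z=0\}$, pick $g_{0}\in\G$ with $z(g_{0})>0$ (replacing it by its inverse if needed). First I would construct an auxiliary probability measure $\m_{0}$ on $\G$ which is non-degenerate, of finite first moment, and of finite Shannon entropy. When $\G$ is finitely generated, take $\m_{0}$ uniform on a finite symmetric generating set. In the general countable case, enumerate $\G=\{h_{i}\}_{i\geq 1}$ and set $\m_{0}(h_{i})=Z^{-1}2^{-i-\lceil d(id,h_{i})\rceil}$; a direct estimate (using that $x 2^{-x}$ is bounded) shows $\sum_{i}\m_{0}(h_{i})d(id,h_{i})<\infty$ and $H(\m_{0})<\infty$, and since $\m_{0}$ has full support in $\G$ it is automatically non-degenerate. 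Fix $\e\in(0,1)$ and for each integer $N\geq 1$ define
$$
\m_{N}=(1-\e)\d_{g_{0}^{N}}+\e\m_{0}.
$$

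The measure $\m_{N}$ is non-degenerate, has finite first moment, and is finitely supported precisely when $\m_{0}$ is. The mixture identity for entropy gives $H(\m_{N})\leq H(\e,1-\e)+\e H(\m_{0})$, bounded independently of $N$. On the other hand,
$$
\a_{N}=\E(\m_{N})_{z}=(1-\e)Nz(g_{0})+\e\E(\m_{0})_{z}
$$
grows linearly in $N$, so Theorem~\ref{Cantor} yields $\dim\n_{N}\leq h_{\m_{N}}/|\a_{N}|=O(1/N)$. Choosing $N$ large enough drives this below $1$, and $\n_{N}$ is therefore singular with respect to Lebesgue measure.

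The one delicate point, and the main technical step beyond the construction itself, is that Theorem~\ref{Cantor} is stated for finitely supported $\m$, which covers the finitely generated case but not the general countable one. I expect its proof to extend with only cosmetic changes to any $\m$ of finite first moment and finite Shannon entropy, since the two ingredients actually invoked are Lemma~\ref{ffm} (valid whenever $\m$ has finite first moment) and the Shannon--McMillan--Breiman theorem applied to the $k$-step block partition $\pP_{n}^{k}$ (valid whenever $H(\m^{\ast k})<\infty$). Verifying this extension, rather than any new probabilistic content, is where the principal effort lies.
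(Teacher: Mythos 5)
Your proposal is correct and follows essentially the same route as the paper's own proof: the authors likewise take a finite-entropy, finite first moment, non-degenerate base measure $\bar\m$ on $\G$ and perturb it by a Dirac mass at a large power $g^{l}$ of an element with $z(g)\neq 0$, using $\m_l=\frac{1}{2}(\bar\m+\d_{g^l})$ so that $H(\m_l)$ stays bounded while $|\a_l|\to\infty$, then conclude via Theorem~\ref{Cantor} and Proposition~\ref{unique-harm}. Your remark that Theorem~\ref{Cantor} must be extended from finitely supported to finite-entropy, finite first moment measures in the general countable case is a legitimate point that the paper itself passes over silently, and your assessment that the proof goes through unchanged under those weaker hypotheses is accurate.
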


The same result for the group $SL(d,\R)$ instead of $Sol$ was proved by Kaimanovich and Le Prince in \cite{KLeP}. 
We use the same strategy, constructing random walks that have uniformly bounded entropy and arbitrary large speed. On the other hand, we will construct in Section \ref{Pisot} random walks having arbitrary small speed and singular harmonic measure by using specific measures $\m$.

\begin{proof}
By chosing sufficiently fast decay of mass towards infinity, the countable group $\G$ admits a non-degenerate probability measure $\bar \m$ with finite entropy and finite first moment. 
When $\G$ is finitely generated, $\bar \m$ can be chosen to be finitely supported.
By assumption, $\G$ contains an element $g$ with non-zero vertical component $z(g)$. For an integer $l$, take $\m_l=\frac{1}{2}(\bar \m+\d_{g^l})$, where $\d_{g^l}$ denotes the Dirac mass at $g^l$. The entropy of the associated random walk is bounded above by $h_{\m_l} \leq H(\m_l)=\frac{1}{2} H(\bar \m)+\log 2$, and its speed is given by $\a_l=\E (\m_l)_z=\frac{1}{2} (\E(\bar\m)_z+lz(g))$. By Theorem \ref{Cantor}, the dimension of the harmonic measure is less than
$$
\dim \n_l \leq \frac{\frac{1}{2}H(\bar \m)+\log2}{|\a_l|} \underset{l\to \infty}{\longrightarrow} 0. 
$$
For $l$ large enough, we have $\dim \n_l<1$, so the harmonic measure is singular with respect to Lebesgue measure by Proposition \ref{unique-harm}.
\end{proof}

\begin{remark}\label{plusminus}
By choosing negative or positive powers of $l$ in the above proof, the support of the singular harmonic measure $\n_l$ for the random walk of law $\m_l$ on $\G$ can be chosen to be either included in $\partial^+Sol$ or included in $\partial^-Sol$. 
\end{remark}

\begin{proof}[Proof of Theorem \ref{main} (1)]
A non-abelian subgroup $\G$ of $Sol$ is not contained in the horizontal plane, so by Corollary \ref{fingen} there exists a non-degenerate finite first moment measure $\m$ with completely singular harmonic measure. There remains to prove that $\n$ has no point mass, which amounts by Proposition \ref{non-atomic} to verifying that the action of $\G$ on the boundary is non-elementary. 

Let $g$ be an element of $\G$ with non-zero vertical component. By Fact \ref{pp}, the action at a boundary is non-elementary unless for any $g'$ in $\G$, we have $p^+(g)=p^+(g')$ and $p^-(g)=p^-(g')$. But in this case, we have $g'=g^{\frac{z'}{z}}$ which belongs to $\{g^t=(tz, x(\frac{e^{-tz}-1}{e^{-z}-1}),y(\frac{e^{tz}-1}{e^{z}-1})\}_{t \in \R} $, which is a $1$-dimensional abelian Lie subgroup, so $\G$ is abelian.
\end{proof}

\begin{remark}
More precisely, the above proof and Remark \ref{plusminus} show that if the countable non-abelian group $\G$ is included in a hyper-surface $\{p^+(g)=c\}$ (resp. $\{p^-(g)=c\}$) for a constant $c$, then there exists a non-degenerate finite first moment probability measure $\m$ on $\G$ such that the harmonic measure is non-atomic singular with support included in $\partial^-Sol$ (resp. $\partial^+Sol$). If $\G$ is not included in such hypersurfaces, then we can find a non-degenerate measure $\m^+$ with non-atomic singular harmonic measure $\n^+$ on $\partial^+Sol$, as well as $\m^-$ with non-atomic singular $\n^-$ on $\partial^-Sol$.
\end{remark}

\section{Absolute continuity of harmonic measures}\label{abs}

The aim of this section is to give exemples of probability measures for which the random walk has an associated harmonic measure absolutely continuous with respect to Lebesgue measure. By symmetry, we focus on the case $\a>0$ and identify the boundary with $\R=\partial^+Sol$.

Recall that a probability measure $\m$ on $Sol$ gives independence to the $z$ and $x$ components if the projection $\pi_\ast \m$ on the $zx$-plane is a product measure $\m_z \times \m_x$. In this case, and if the support of the vertically projected measure $\m_z$ is included in a lattice $\g\Z$ in $\R$, the harmonic measure $\n$ is tightly related to the Bernoulli convolution of parameter $e^{-\g}$.

\subsection{Bernoulli convolutions}\label{bernoulli} The Bernoulli convolution $b_\l$ with parameter $\l \in ]0,1[$ is the convolution measure $(\frac{1}{2}\d_{\l^{j}}+\frac{1}{2}\d_{-\l^{j}})^{\ast j\in \N}$, where $\d_a$ denotes the Dirac mass at point $a$. In other terms, $b_\l$ is the probability distribution of the random variable $\sum_{j=0}^{\infty}x_{j}\l^{j}$, where $\{x_j\}_{j \in \N}$ is a sequence of independent variables equidistributed on the set $\{1,-1\}$.

These measures have been studied since 1930's. Simple observations show that if $\l$ belongs to $]0, \frac{1}{2}[$, the measure $b_{\l}$ is singular with respect to Lebesgue measure, since it is supported on a Cantor set, and $b_{\frac{1}{2}}$ is the Lebesgue measure itself on the interval $[-2,2]$.

The most famous question about Bernoulli convolution is to determine for which $\l$ in $]\frac{1}{2}, 1[$ the measures $b_{\l}$ are absolutely continuous or completely singular with respect to Lebesgue measure. The relevance of this question was pointed out by Erd\"os, who proved the two following results. Definition and basic facts about Pisot numbers are presented in the Appendix \ref{appendpisot}.

\begin{theorem}[Erd\"os 1939 \cite{E}]\label{erdos1}
Let $\l$ be the inverse of a Pisot number, then the Bernoulli convolution $b_\l$ is singular with respect to Lebesgue measure.
\end{theorem}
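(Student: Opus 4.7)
The plan is to follow Erd\"os' classical Fourier-analytic approach. The goal is to show that the characteristic function $\widehat{b_\l}(t)$ does not tend to $0$ at infinity; by the Riemann--Lebesgue lemma this precludes absolute continuity of $b_\l$, and combined with the Jessen--Wintner purity theorem (applied to $b_\l$ as the distribution of a convergent sum of independent discrete variables), this forces $b_\l$ to be purely singular with respect to Lebesgue measure.

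First, I would write down the characteristic function explicitly. Since $b_\l$ is the law of $\sum_{j=0}^\infty x_j \l^j$ with the $x_j$ i.i.d. uniform on $\{\pm 1\}$, independence gives
$$
\widehat{b_\l}(t) \;=\; \prod_{j=0}^\infty \cos(\l^j t).
$$
The aim is then to exhibit a sequence $t_n \to \infty$ along which this product stays bounded away from $0$.

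Second, I would invoke the defining Pisot property of $\th := 1/\l$: its algebraic conjugates all lie strictly inside the unit disk, and the Newton sum $\th^n + \th_2^n + \cdots + \th_d^n$ is an integer for every $n$. Hence there exist $C > 0$ and $0 < r < 1$ with $\mathrm{dist}(\th^n, \Z) \le C r^n$ for every $n \ge 0$; this is the standard fact recalled in the appendix. I would then test the characteristic function at $t_n = 2\pi \th^n$. Since $\l^j t_n = 2\pi \th^{n-j}$, splitting the product at $j = n$ yields
$$
\widehat{b_\l}(2\pi\th^n) \;=\; \left(\prod_{k=0}^n \cos(2\pi\th^k)\right) \left(\prod_{k=1}^\infty \cos(2\pi\l^k)\right).
$$
The tail product is a fixed nonzero constant: $\l^k \to 0$ geometrically so $\cos(2\pi\l^k) = 1 + O(\l^{2k})$ and the infinite product converges to a positive limit. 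For the head product, the Pisot estimate gives $|1 - \cos(2\pi\th^k)| = O(\mathrm{dist}(\th^k,\Z)^2) = O(r^{2k})$, so the partial products form a Cauchy sequence with some limit $L$. Provided $L \ne 0$, it follows that $|\widehat{b_\l}(2\pi\th^n)|$ is bounded below uniformly in $n$, contradicting Riemann--Lebesgue.

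The main obstacle is verifying that the head product limit $L$ is nonzero. The geometric tail rate is automatic from the Pisot estimate, but the small-$k$ cosines must be examined: $\cos(2\pi\th^k) = 0$ would mean $\th^k \in \Z + \tfrac12$, i.e.\ $\th^k$ rational, which forces $\th$ itself to be rational, hence a positive integer (the only rational Pisot numbers), in which case every $\cos(2\pi\th^k) = 1$ and $L = 1 \ne 0$ trivially. In every other case the factors are nonzero algebraically and the Pisot decay secures $L \ne 0$. Invoking Jessen--Wintner purity then completes the proof.
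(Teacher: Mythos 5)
Your proposal is correct and is essentially the classical Erd\"os argument, which is exactly the method the paper itself deploys (it cites this statement without proof, but the proof of its generalization, Theorem \ref{thmerdos}, follows the same scheme: write the Fourier transform as an infinite cosine product, evaluate along $t_n=2\pi\theta^n$, use the Pisot approximation lemma of the appendix to bound the product below, and conclude by Riemann--Lebesgue plus purity). One small slip: $\cos(2\pi x)=0$ forces $x\in\tfrac14+\tfrac12\Z$, not $x\in\Z+\tfrac12$, and the cleanest way to rule this out is that $\theta^k$ is an algebraic integer, hence cannot equal the non-integral rational $\tfrac14+\tfrac m2$; this also quietly presumes the intended range $\lambda\in(\tfrac12,1)$, since for the integer Pisot number $\theta=2$ the tail factor $\cos(2\pi\lambda^2)$ vanishes and indeed $b_{1/2}$ is absolutely continuous.
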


However, Pisot numbers are measure theoretically exceptional numbers, and Erd\"os proved that absolute continuity, and even existence of regular densities, hold for almost all parameters in a neighborhood of $1$.

\begin{theorem}[Erd\"os 1940 \cite{E2}]\label{erdosCk}
For any $k \in \N$, there exists $\l_{k} < 1$ such that $b_{\l}$ has a density of class $C^{k}$ for Lebesgue a.e. $\l \in [\l_{k}, 1]$.\end{theorem}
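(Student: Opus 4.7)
Since $b_\lambda$ is the distribution of $\sum_{j=0}^{\infty}x_j\lambda^{j}$ with the $x_j$ i.i.d.\ uniform on $\{\pm 1\}$ and $\cos$ is the characteristic function of the uniform law on $\{\pm 1\}$, the Fourier transform of $b_\lambda$ factors as the infinite product
$$\widehat{b_\lambda}(t)=\prod_{j=0}^{\infty}\cos(\lambda^{j}t).$$
By Fourier inversion, if one can prove a polynomial bound $|\widehat{b_\lambda}(t)|\le C(\lambda)(1+|t|)^{-\alpha}$ with $\alpha>k+1$, then $t^{k}\widehat{b_\lambda}(t)\in L^{1}(\R)$ and $b_\lambda$ has density $f_\lambda=\mathcal{F}^{-1}\widehat{b_\lambda}\in C^{k}(\R)$. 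The task thus reduces to producing Fourier decay of order exceeding $k+1$ for Lebesgue-a.e.\ $\lambda$ in a suitable left-neighborhood of $1$.

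\textbf{Averaged $L^{2}$ decay.} The central estimate I would establish is
$$\int_{\lambda_0}^{1-\delta}\bigl|\widehat{b_\lambda}(t)\bigr|^{2}d\lambda\le C(\lambda_0)\,|t|^{-\gamma(\lambda_0)},$$
with $\gamma(\lambda_0)\to\infty$ as $\lambda_0\to 1$. Fix a truncation level $N=N(|t|,\lambda)$ with $|\lambda^{N}t|\asymp 1$; the tail $\prod_{j\ge N}\cos(\lambda^{j}t)$ contributes an $O(1)$ factor, while the head $\prod_{j<N}\cos(\lambda^{j}t)$ expands via $\cos\theta=\tfrac{1}{2}(e^{i\theta}+e^{-i\theta})$ into $2^{-N}\sum_{\varepsilon\in\{\pm 1\}^{N}}\exp\!\bigl(it\sum_{j<N}\varepsilon_j\lambda^{j}\bigr)$. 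Squaring and integrating in $\lambda$ produces a double sum of oscillatory integrals whose phases have derivative of order $|t|$ times a non-trivial polynomial in $\lambda$; a van der Corput / integration-by-parts estimate then gains a factor $|t|^{-1}$ for each pair $(\varepsilon,\varepsilon')$ whose phase polynomial is non-degenerate. A counting argument shows that, for $\lambda$ close to $1$, the exponential sums $\sum_{j<N}\varepsilon_j\lambda^{j}$ are generically well separated, so the proportion of nearly-resonant pairs is small, yielding the advertised growth of $\gamma(\lambda_0)$.

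\textbf{From averaged to pointwise.} Chebyshev's inequality gives, for any integer $n\ge 1$ and any $\beta<\gamma(\lambda_0)/2$,
$$\mathrm{Leb}\bigl\{\lambda\in[\lambda_0,1-\delta]:|\widehat{b_\lambda}(n)|>n^{-\beta}\bigr\}\le C(\lambda_0)\,n^{2\beta-\gamma(\lambda_0)},$$
which is summable in $n$ as soon as $2\beta+1<\gamma(\lambda_0)$. Borel--Cantelli then provides $|\widehat{b_\lambda}(n)|\le n^{-\beta}$ for every sufficiently large integer $n$, for a.e.\ $\lambda\in[\lambda_0,1-\delta]$. Since $\widehat{b_\lambda}$ is Lipschitz on bounded intervals with constant controlled by the first moment of $b_\lambda$ (uniformly in $\lambda\in[\lambda_0,1-\delta]$), a dyadic interpolation upgrades the bound at integers to arbitrary $t$. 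Choosing $\lambda_k$ so close to $1$ that $\gamma(\lambda_k)/2>k+1$ then completes the Fourier-decay hypothesis of Step~1 and proves the theorem on $[\lambda_k,1-\delta]$; exhausting $\delta\downarrow 0$ along a countable sequence covers $[\lambda_k,1)$.

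\textbf{Main obstacle.} The serious step is the averaged $L^{2}$ bound: one must quantify precisely the oscillatory cancellation in the expanded squared product and verify that the effective decay exponent $\gamma(\lambda_0)$ can be driven to infinity by taking $\lambda_0$ sufficiently close to $1$. The technical heart is controlling how many of the $2^{2N}$ pairs of exponent sums $\sum_{j<N}\varepsilon_j\lambda^{j}$, $\sum_{j<N}\varepsilon'_j\lambda^{j}$ fail to be transversal as functions of $\lambda$ on $[\lambda_0,1-\delta]$; this combinatorial--analytic counting problem, together with the resulting lower bound on phase derivatives, is the core of Erd\"os's 1940 argument and the place where the proof could most easily break down.
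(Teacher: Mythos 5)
The paper does not prove this statement: it is quoted verbatim from Erd\"os \cite{E2} and used as a black box (only its consequence, via Lemma \ref{schwartz}, enters the proof of Theorem \ref{Ck}). So your proposal has to stand on its own, and while the overall architecture (infinite cosine product, polynomial Fourier decay for a.e.\ $\l$, inversion to get a $C^k$ density) is the right one, the two load-bearing steps are exactly where it breaks. The central averaged estimate $\int_{\l_0}^{1-\d}|\widehat{b_\l}(t)|^2 d\l \le C|t|^{-\g(\l_0)}$ with $\g(\l_0)\to\infty$ is only asserted, and the mechanism you propose for it --- transversality of the difference polynomials $\sum_{j<N}(\e_j-\e'_j)\l^j$ on $[\l_0,1-\d]$ --- fails precisely in the regime you need: for $\l$ near $1$ there exist power series with coefficients in $\{-1,0,1\}$ having multiple zeros, so one cannot bound the phase derivative from below on the set where the phase is small, and the van der Corput gain is unavailable for a non-negligible family of pairs. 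The heuristic that resonant pairs become rarer as $\l\to1$ is not borne out. The device that rescues this, and is the actual heart of Erd\"os's 1940 argument (see also \cite{PSS}), is the factorization over residues mod $m$: splitting the index set into $m$ arithmetic progressions gives $\widehat{b_\l}(t)=\prod_{r=0}^{m-1}\widehat{b_{\l^m}}(\l^r t)$, so a single fixed decay exponent $\d_0>0$, established for a.e.\ $\m=\l^m$ in a range where the analysis does work, is amplified to $m\d_0$ for a.e.\ $\l$ in a corresponding left-neighborhood of $1$; taking $m$ large yields arbitrarily high polynomial decay and hence $C^k$ densities. Without some such amplification your $\g(\l_0)$ cannot be driven to infinity.

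A second, smaller but genuine, gap is the passage from decay at integers to decay at all real $t$. The Lipschitz constant of $\widehat{b_\l}$ is of order $(1-\l)^{-1}\ge 1$, so knowing $|\widehat{b_\l}(n)|\le n^{-\b}$ at integer points gives no control whatsoever on $t\in(n,n+1)$: the function could return to size $O(1)$ inside each unit gap. One must instead run the Chebyshev/Borel--Cantelli argument on quantities that already control a supremum over dyadic blocks, e.g.\ $\int_T^{2T}|\widehat{b_\l}|^2$ combined with a bound on $\partial_t\widehat{b_\l}$ via $\sup|f|^2\le \frac{1}{T}\int|f|^2+2\int|f|\,|f'|$, or work with a weighted Sobolev-norm formulation in the style of Peres--Schlag.
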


In particular, this implies that almost all Bernoulli convolutions in a left neighborhood of $1$ are absolutely continuous. This left-neighborhood is as big as one can expect as shown by Solomyak \cite{S} (see also \cite{PS} for a simple proof).

\begin{theorem}[Solomyak 1995 \cite{S}]\label{solom}
For Lebesgue a.e. $\l$ in $[\frac{1}{2},1[$, the Bernoulli convolution $b_\l=(\frac{1}{2} \d_{\l^n}+\frac{1}{2}\d_{-\l^n})^{\ast n \in \N}$ is absolutely continuous with respect to Lebesgue measure.
\end{theorem}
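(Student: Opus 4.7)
The plan is to adapt the Peres--Solomyak argument \cite{PS}, which actually proves the stronger statement that $b_\l$ admits an $L^2$ density for a.e.\ $\l \in [1/2, 1)$. The starting point is the $L^2$-criterion: a finite measure $\n$ on $\R$ has a density in $L^2$ provided that the integrated lower density $\int \liminf_{r \to 0} \n(B(x, r))/(2r) \, d\n(x)$ is finite. By Fatou's lemma, this quantity is bounded by $\liminf_{r \to 0} \Pb(|Y - Y'| < r)/(2r)$, where $Y, Y'$ are independent samples from $\n$. For $\n = b_\l$, the symmetric random variable $Y - Y'$ has the law of $2\sum_{j \ge 0} \y_j \l^j$ with $\y_j \in \{-1, 0, 1\}$ independent and with weights $(1/4, 1/2, 1/4)$, so it suffices to control the small-ball probability of these signed random power series.

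The second step averages over $\l$. By Fubini, to obtain absolute continuity of $b_\l$ for a.e.\ $\l$ in an interval $I$ it is enough to show
\[\int_{I}\Pb\Bigl(\bigl|\textstyle\sum_{j \ge 0}\y_j \l^j\bigr| < r\Bigr)\, d\l = O(r) \qquad (r \to 0^+).\]
Conditioning on the smallest index $n$ with $\y_n \neq 0$---a geometric variable of ratio $1/2$---reduces this to estimating, uniformly in coefficient sequences $a_j \in \{-1, 0, 1\}$, the Lebesgue measure of the sublevel sets $\{\l \in I : |g_a(\l)| < r\l^{-n}\}$, where $g_a(\l) = 1 + \sum_{j \ge 1} a_j \l^j$.

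The crux of the argument, and the main obstacle, is the transversality lemma: there exist $\d > 0$ and an open interval $I \subset (1/2, 1)$ such that every analytic function $g(\l) = 1 + \sum_{j \ge 1} a_j \l^j$ with $|a_j| \le 1$ satisfies the implication $|g(\l_0)| < \d \Rightarrow g'(\l_0) < -\d$ for every $\l_0 \in I$. Establishing this uniform transversality requires a delicate analysis of an explicit finite family of extremal power series that saturate the worst-case behaviour near roots, together with tail bounds of the form $|\sum_{j \ge N} a_j \l^j| \le \l^N/(1-\l)$; the available $I$ is necessarily strictly smaller than $(1/2, 1)$. Granting transversality, each sublevel set $\{|g_a| < \rho\} \cap I$ is a disjoint union of intervals of total length $O(\rho/\d)$, and summing the resulting geometric series in $n$ gives the required $O(r)$ bound, hence $b_\l \in L^2$ for a.e.\ $\l \in I$.

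Finally, to cover $[1/2, 1) \setminus I$ up to Lebesgue null sets, use the self-similar decomposition $Y_\l \stackrel{d}{=} \sum_{k=0}^{n-1} \l^k Z_k$, where $Z_0, \dots, Z_{n-1}$ are independent copies of $Y_{\l^n}$; this writes $b_\l$ as the convolution of scaled copies of $b_{\l^n}$, so absolute continuity of $b_{\l^n}$ forces that of $b_\l$. Because $\l \mapsto \l^n$ is bi-Lipschitz on compact sub-intervals of $(0,1)$, the set $\{\l : \l^n \in I\}$ is an interval on which $b_\l$ is absolutely continuous for a.e.\ $\l$. Varying $n$ over $\N$ (for $\l$ close to $1$, pick $n$ with $\l^n \in I$), these intervals exhaust $[1/2, 1)$, completing the proof.
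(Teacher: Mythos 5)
First, a point of context: the paper does not prove this statement. It is quoted as an external result of Solomyak \cite{S}, with \cite{PS} cited for a simple proof, so your attempt has to be measured against the published arguments rather than against anything in the text. Your sketch correctly reproduces the skeleton of the Peres--Solomyak proof: the $L^2$ lower-density criterion, the passage to $Y-Y'=2\sum_j\eta_j\lambda^j$ with $\eta_j\in\{-1,0,1\}$ of weights $(1/4,1/2,1/4)$, the Fubini/Fatou exchange in $\lambda$, the conditioning on the first nonzero coefficient, the reduction to sublevel sets of $g_a$, and the self-similar convolution decomposition showing that absolute continuity of $b_{\lambda^n}$ implies that of $b_\lambda$. (One small omission: summing the geometric series in $n$ needs $I$ bounded away from $1/2$, since the bound on each term is of order $r(\inf I)^{-n}2^{-n}$; this is harmless but should be said.)

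The genuine gap is in the final covering step, and it cannot be repaired as stated. Writing $I=(a,b)$ with $a=1/2$, the sets $\{\lambda:\lambda^n\in I\}=(a^{1/n},b^{1/n})$ have union $(1/2,1)$ only if consecutive intervals overlap; the binding case is $n=1$ against $n=2$, which forces $b\ge a^{1/2}=2^{-1/2}\approx 0.707$. But uniform $\delta$-transversality for the class $g(\lambda)=1+\sum_{j\ge1}a_j\lambda^j$, $|a_j|\le 1$, provably fails before $2^{-1/2}$: this class contains a power series with a double zero at a point numerically near $0.668$, where $g=g'=0$, so no $\delta>0$ works in a neighbourhood of that point; the explicit $(*)$-function verification of \cite{PS} only reaches $[2^{-1},2^{-2/3}]$ with $2^{-2/3}\approx 0.630$. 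Hence $\bigcup_n\{\lambda:\lambda^n\in I\}$ necessarily misses a subinterval of $(2^{-2/3},2^{-1/2})$ of positive Lebesgue measure, on which your argument says nothing. This is exactly the range for which both \cite{S} and \cite{PS} need an additional idea (a modified random series whose coefficient class admits transversality further to the right). Separately, the transversality lemma itself --- which you correctly identify as the crux --- is asserted rather than proved, so as written the proposal does not establish the theorem on any interval.
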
  

It is still an open question whether Pisot numbers are the only parameters in $[\frac{1}{2},1[$ for which $b_\l$ is singular. More information about Bernoulli convolutions can be found in the expository article by Peres, Schlag and Solomyak \cite{PSS}. These three theorems permit to prove similar results in the context of harmonic measures at the boundary of $Sol$.

\subsection{Absolute continuity via Bernoulli convolutions} Recall from Section \ref{poissonSol} that the harmonic measure $\n$ is the law of the random variable $\x=\sum_{j=1}^\infty x_j e^{-S_{j-1}}$, where $X_j=(z_j,x_j,y_j)$ are independent of law $\m$, and $S_j=z_1+\dots+z_j$ are the partial sums of vertical components.

 When the measure $\pi_\ast \m$ inherited on the $zx$-plane is a product measure between the Dirac mass $\m_z=\d_{\log(\frac{1}{\l})}$ and $\m_x=\frac{1}{2}\d_1+\frac{1}{2}\d_{-1}$, we recover exactly the Bernoulli convolution of parameter $\l$. Note that this situation is degenerate in the sense that $Supp(\m)$ only generates a semi-group in $Sol$. However, we prove the following, related to Solomyak's Theorem \ref{solom}.

\begin{theorem}\label{thmsolomyak}
Assume that $\m$ has finite first moment and that $\pi_\ast \m=\m_z \times \m_x$ is a product measure between 
\begin{enumerate}
\item $\m_z=p\d_\g+(1-p)\d_{-\g}$ with $p > \frac{1}{2}$ (thus $\a=\E\m_z=(2p-1)\g>0$),
\item and $\m_x=\frac{1}{2}\d_1+\frac{1}{2}\d_{-1}$.
\end{enumerate}
Then for Lebesgue-a.e. choice of parameter $\g \in ]0,\log(2)]$, the harmonic measure $\n$ on $\R$ corresponding to the pair $(Sol, \m)$ is absolutely continuous with respect to Lebesgue measure (for any $p > \frac{1}{2}$).
\end{theorem}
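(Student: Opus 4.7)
The plan is to realise the harmonic measure $\n$ as the convolution of the genuine Bernoulli convolution $b_\l$ with parameter $\l = e^{-\g}$ and a further probability measure, and then to invoke Solomyak's Theorem \ref{solom}.

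The starting point is that $\m_z$ is supported on $\{\pm\g\}$, so $S_j \in \g\Z$ and the normalised vertical walk $W_j := S_j/\g$ is a biased nearest-neighbour random walk on $\Z$ with $\P(W_{j+1}-W_j = +1) = p > 1/2$. The hypothesis $p > 1/2$ ensures that the ascending ladder times $\t_k := \inf\{j \ge 0 : W_j = k\}$ are almost surely finite for every $k \ge 0$, and by the strong Markov property the blocks of $(z_j, x_j)$ between consecutive ladder times form an i.i.d.~sequence of excursions, each running from level $k-1$ to level $k$.

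Rewriting the series $\x = \sum_j x_j \l^{W_{j-1}}$ from Proposition \ref{xsi} excursion by excursion, and shifting each excursion so that it starts at level $0$, produces
\[
\x = \sum_{k \ge 1} \l^{k-1} A^{(k)}
\]
with i.i.d.~summands $A^{(k)}$. The decisive observation is that, by definition of $\t_k$, each excursion ends with an upward step ($W_{\t_k-1} = k-1$, $W_{\t_k} = k$), so the last contribution to $A^{(k)}$ is precisely $\e_k := x_{\t_k} \in \{\pm 1\}$. Split $A^{(k)} = B_k + \e_k$. Since $\t_k$ is measurable with respect to the $z$-process while the $x_j$'s are i.i.d.~and independent of the $z$'s, a conditioning argument on the $z$-process shows that $(\e_k)_k$ is i.i.d.~uniform on $\{\pm 1\}$ and, jointly, independent of the family $(B_k)_k$. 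It follows that $\x = \x' + \z$ with
\[
\x' := \sum_{k\ge 1}\l^{k-1} B_k \quad\text{and}\quad \z := \sum_{k\ge 1}\l^{k-1}\e_k,
\]
and $\x' \perp \z$; both series converge almost surely since $\x$ does (Proposition \ref{xsi}) and since $|\e_k| = 1$ with $\l < 1$. By construction $\z$ has law $b_\l$, so $\n = \n' * b_\l$ where $\n'$ is the law of $\x'$.

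Solomyak's Theorem \ref{solom} now gives the absolute continuity of $b_\l$ for Lebesgue-a.e.~$\l \in [\tfrac12, 1)$, equivalently for Lebesgue-a.e.~$\g \in (0,\log 2]$. Since the convolution of an absolutely continuous measure with any probability measure is absolutely continuous, the identity $\n = \n'*b_\l$ immediately yields the theorem for every $p > 1/2$. The point requiring real care is the verification that $(B_k)_k$ and $(\e_k)_k$ are genuinely independent despite the ladder times being dictated by the $z$-process: this independence is what permits the clean separation of $\x$ into a pure Bernoulli convolution $\z$ and an independent remainder $\x'$, and it hinges on the conditional law of $x_{\t_k}$ given the entire $z$-process remaining uniform on $\{\pm 1\}$.
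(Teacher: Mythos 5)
Your proof is correct and follows essentially the same route as the paper: both arguments peel off one $\pm\lambda^{k}$ contribution for each level $k\ge 0$ visited by the transient vertical walk, so as to exhibit $b_\lambda$ as a convolution factor of the harmonic measure, and then conclude from Solomyak's theorem together with the fact that convolving with an absolutely continuous measure preserves absolute continuity. The only (cosmetic) difference is that you extract this factor unconditionally via ladder times and an independence argument, obtaining $\n=\n'\ast b_\lambda$ directly, whereas the paper first conditions on the vertical path $\z$ and factors each conditional measure as $\n_\z=b_{e^{-\g}}\ast\n_{aux}$ using commutativity of convolutions and the fact that $n(\z,k)\ge 1$ for all $k\ge 0$.
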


Given $\o$ in $\O$, define the sequence $\z=(S_j)_{j=0}^\infty$  of integers $S_j=(z_1+\dots + z_j)/\g$. Let $\O_0$ be the set of semi-infinite path emanating from zero in $\Z$. The map $proj:\O \rightarrow \O_0$ given by $proj(\o)=\z$ describes the random walk obtained by projection on the $z$-axis. Denote $\P_0=\P \circ proj^{-1}$ the push-forward measure of $\P$ by this map.

If $\pi_\ast \m$ is a product measure, the harmonic measure can be decomposed along conditional probability measures $\{\n_\z\}_{\z \in \O_0}$ such that:
\begin{eqnarray}\label{decomposition}\n=\int_{\O_0} \n_\z d\P_0(\z), \end{eqnarray}
and $\n_\z$ is the distribution of the random variable $\x_\z=\sum_{j=1}^\infty x_j e^{-\g S_{j-1}}$, where $\z=\{S_j\}_{j=0}^{\infty}$ is fixed, and $\{x_j\}_{j=0}^{\infty}$ are independent variables of law $\m_x$. By this decomposition, Theorem \ref{thmsolomyak} is a direct consequence of Solomyak's Theorem \ref{solom}.

\begin{proof}[Proof of Theorem \ref{thmsolomyak}]
Denote $E$ the set of $\l \in [\frac{1}{2},1[$ such that $b_\l$ is absolutely continuous with respect to Lebesgue measure. It is sufficient to prove that $\n$ is absolutely continuous when $e^{-\g} \in E$.

We use decomposition (\ref{decomposition}) and note that $\n_\z$ is a convolution:
\begin{eqnarray*}\n_\z=\left(\frac{1}{2} \d_{e^{-\g S_{j-1}}}+\frac{1}{2}\d_{-e^{-\g S_{j-1}}}\right)^{\ast j \in \N}.\end{eqnarray*}
It is sufficient to check that $\n_\z$ is absolutely continuous for $\P_0$-a.e. $\z$.

For any real numbers $\l_1,\l_2$, we have commutation of convolutions: 
$$(\frac{1}{2} \d_{\l_1}+\frac{1}{2}\d_{-\l_1})\ast (\frac{1}{2} \d_{\l_2}+\frac{1}{2}\d_{-\l_2})= (\frac{1}{2} \d_{\l_2}+\frac{1}{2}\d_{-\l_2})\ast (\frac{1}{2} \d_{\l_1}+\frac{1}{2}\d_{-\l_1}).$$
This permits to rewrite:
\begin{eqnarray}\label{zetaconvolution}\n_\z=\left(\left (\frac{1}{2} \d_{e^{-\g k}}+\frac{1}{2}\d_{-e^{-\g k}}\right)^{\ast n(\z,k)}\right)^{\ast k \in \Z}, \end{eqnarray}
where $n(\z,k)=\#\{j \in \N| S_{j-1}=k\} $ is the time spent by the vertical random walk at position $k$. Almost surely with respect to $\P_0$, it satisfies $n(\z,k) \geq1$ for all $k\geq 0$ and there exists $k_0$ with $n(\z,k)=0$ for all $k \leq k_0$ $\P_0$-almost surely (see Appendix \ref{probafacts}). 

Using commutation once more, for $\P_0$-a.e. choice of $\z$, we can factorize $\n_\z=b_{e^{-\g}} \ast \n_{aux}$ for some auxiliary measure $\n_{aux}$. The measure $b_{e^{-\g}}$ is absolutely continuous with respect to Lebesgue because $e^{-\g}$ belongs to $E$. By convolution, $\n_\z$ is also absolutely continuous with respect to Lebesgue measure for $\P_0$-a.e. $\z$. 
Thus by (\ref{decomposition}), $\n$ is absolutely continuous.
\end{proof}

\begin{remark}
As observed by Kahane \cite{Ka} (see also Section 6 in \cite{PSS}), the Hausdorff dimension of the set of parameters $\l$ in an interval $[\l_0,1]$ with $b_\l$ singular tends to zero as $\l_0$ approaches $1$. By the above proof, this guarantees a similar result in our setting, namely that the Hausdorff dimension of the set of parameters $\g$ in an interval $[0,\g_0]$ such that the conclusion of Theorem \ref{thmsolomyak} does not hold tends to $0$ when $\g_0$ approaches $0$. 
A recent notable result by Shmerkin states that the Hausdorff dimension of the set of parameters $\l$ in the interval $[1/2,1]$ with $b_\l$ singular is in fact zero \cite{Sh}. Again, a similar result in our setting holds for the set of parameter $\g$ in the interval $]0, \log(2)]$.
\end{remark}

\begin{remark}\label{remPS2} The particular choice of measures $\m_z$ and $\m_x$ in the hypothesis of Theorem \ref{thmsolomyak} is due to the necessity to apply results about Bernoulli convolution.  By Theorem 1.3 and Corollary 5.2 in \cite{PS2},
Theorem \ref{thmsolomyak} can be generalized to the case $\m_x=\sum_{i=1}^m p_i \d_{d_i}$, for which the harmonic measure $\n$ is absolutely continuous for Lebesgue-a.e. choice of parameter $\log(1+\sqrt{b}) \leq \g \leq H(\m_x)$, where $b=\sup\{|\frac{d_i-d_j}{d_k-d_l}|, 1 \le i, j, k, l \le m, d_{k} \neq d_{l}\}$. One may naturally ask about generalization to other measures, but already for Bernoulli convolution, this seems to be a difficult task.
\end{remark}

 \subsection{Densities in the class $C^k$} Erd\"os' Theorem \ref{erdosCk} permits to construct random walks on $Sol$ with harmonic measure admitting a density function of class $C^k$.

\begin{theorem}\label{Ck}
Let $\m$ satisfy the hypothesis of Theorem \ref{thmsolomyak}. For any $k \in \N$, there exists $\g_k>0$ such that for Lebesgue-a.e. choice of parameter $\g \in ]0,\g_k[$, the harmonic measure $\n$ admits a density of class $C^k$.
\end{theorem}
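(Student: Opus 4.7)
The plan is to adapt the proof of Theorem \ref{thmsolomyak} verbatim, substituting Erd\"os' Theorem \ref{erdosCk} for Solomyak's Theorem \ref{solom}. Let $\l_k$ be provided by Theorem \ref{erdosCk}, set $\g_k := -\log \l_k > 0$, and let $E_k \subset [\l_k, 1]$ denote the full Lebesgue-measure set of parameters for which $b_\l$ has a density of class $C^k$. Since $\g \mapsto e^{-\g}$ is a diffeomorphism, the set $\{\g \in \, ]0, \g_k[ \, : \, e^{-\g} \in E_k\}$ has full Lebesgue measure in $]0, \g_k[$; fix such a $\g$.

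Following the proof of Theorem \ref{thmsolomyak}, I would use the decomposition (\ref{decomposition}) together with the rewriting (\ref{zetaconvolution}) via commutation of convolutions, and the $\P_0$-almost sure fact that $n(\z, k) \geq 1$ for every $k \geq 0$, to factorize $\n_\z = b_{e^{-\g}} \ast \n_{aux, \z}$ for $\P_0$-a.e.~$\z$, where each $\n_{aux, \z}$ is a probability measure on $\R$. Integrating against a bounded continuous test function and applying Fubini, we obtain
$$
\n = b_{e^{-\g}} \ast \n_{aux}, \qquad \n_{aux} := \int_{\O_0} \n_{aux, \z} \, d\P_0(\z),
$$
with $\n_{aux}$ a probability measure on $\R$ since it is a mixture of probability measures.

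By the choice of $\g$, $b_{e^{-\g}}$ admits a $C^k$ density $f$. Because $b_{e^{-\g}}$ is compactly supported on $[-(1-e^{-\g})^{-1}, (1-e^{-\g})^{-1}]$, so is $f$, and in particular $f, f', \dots, f^{(k)}$ are bounded and continuous on $\R$. The density of $\n$ is therefore
$$
h(x) = \int_{\R} f(x-y) \, d\n_{aux}(y),
$$
and differentiating $j$ times under the integral sign for $j \leq k$, justified by the uniform bounds $\|f^{(j)}\|_\infty$ and dominated convergence, shows that $h \in C^k(\R)$. There is no serious obstacle here: the analytic input was done by Erd\"os and the combinatorial decomposition was done in the proof of Theorem \ref{thmsolomyak}; the only new ingredient is that convolution with a probability measure preserves $C^k$ regularity once the density has bounded derivatives up to order $k$, which holds automatically for a compactly supported $C^k$ density.
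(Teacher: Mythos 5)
Your proof is correct, but it takes a genuinely different route from the paper's. The paper proves Theorem \ref{Ck} by Fourier analysis: from the product formula $\widehat{\n_\z}(t)=\prod_{k}\cos(te^{-\g k})^{n(\z,k)}$ and the fact that $n(\z,k)\ge 1$ for $k\ge 0$, it deduces the pointwise domination $|\hat\n(t)|\le|\widehat{b_{e^{-\g}}}(t)|$, and then invokes Lemma \ref{schwartz} in both directions; because the converse direction of that lemma costs two derivatives ($\hat\n(t)=O(|t|^{-k})$ only yields a $C^{k-2}$ density), the paper must take $\g_k=-\log\l_{k+2}$, i.e.\ it needs $b_{e^{-\g}}$ to have a $C^{k+2}$ density in order to conclude that $\n$ has a $C^{k}$ one. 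You instead push the convolution factorization $\n_\z=b_{e^{-\g}}\ast\n_{aux,\z}$ from the proof of Theorem \ref{thmsolomyak} through the integral over $\O_0$ to get $\n=b_{e^{-\g}}\ast\n_{aux}$, and then use the elementary fact that convolving a compactly supported $C^k$ density with a probability measure stays $C^k$. This buys you a cleaner statement with $\g_k=-\log\l_k$ (a formally larger, hence at least as strong, interval of parameters) and avoids Fourier transforms entirely; the small price is that you must make sense of $\n_{aux,\z}$ as a probability measure depending measurably on $\z$ so that Fubini applies in the step $\int(b_{e^{-\g}}\ast\n_{aux,\z})\,d\P_0(\z)=b_{e^{-\g}}\ast\int\n_{aux,\z}\,d\P_0(\z)$ --- a point worth one sentence (the auxiliary variable is an explicit measurable function of $\z$ and the $x_j$'s, and $n(\z,k)$ vanishes for $k$ below some random $k_0$, so the leftover infinite convolution converges), but it is exactly the same rearrangement already performed in the paper's proof of Theorem \ref{thmsolomyak}, so there is no gap.
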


The regularity of the density function of a distribution $\n$ is classicaly related to the asymptotic decay of its Fourier transform $\hat \n (t)=\int_\R e^{i t \x}d\n(\x)$, where $i=\sqrt{-1}$. For instance:

\begin{lemma}[Riemann-Lebesgue Lemma]\label{RL}
If the measure $\n$ is absolutely continuous with respect to Lebesgue measure, then the Fourier transform $\hat \n$ is continuous and $\n(t) \rightarrow 0$ as $|t|$ tends to infinity.
\end{lemma}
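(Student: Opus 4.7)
The plan is to reduce the statement to a direct calculation on smooth compactly supported functions, using an $L^1$-density argument to pass from that class to a general density. Since $\nu$ is absolutely continuous with respect to Lebesgue measure and is a probability measure, the Radon--Nikodym theorem gives a nonnegative $f \in L^1(\R)$ with $\|f\|_1 = 1$ and $d\nu(\xi) = f(\xi)\,d\xi$, so
\[
\hat\nu(t) = \int_\R e^{it\xi}\,d\nu(\xi) = \int_\R e^{it\xi} f(\xi)\,d\xi.
\]

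Continuity of $\hat\nu$ I would handle first and it is essentially immediate: if $t_n \to t$ then $e^{it_n\xi} f(\xi) \to e^{it\xi}f(\xi)$ pointwise and $|e^{it_n \xi} f(\xi)| \le f(\xi)$, so the dominated convergence theorem yields $\hat\nu(t_n) \to \hat\nu(t)$.

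For the decay at infinity I would proceed in two steps. First, for $g \in C_c^1(\R)$ an integration by parts gives
\[
\int_\R e^{it\xi} g(\xi)\,d\xi = -\frac{1}{it}\int_\R e^{it\xi} g'(\xi)\,d\xi,
\]
so $|\hat g(t)| \le \|g'\|_1/|t| \to 0$ as $|t| \to \infty$. Second, I would fix $\e > 0$ and exploit the density of $C_c^1(\R)$ in $L^1(\R)$ to choose $g \in C_c^1(\R)$ with $\|f - g\|_1 < \e/2$. Since
\[
|\hat\nu(t) - \hat g(t)| \le \int_\R |f(\xi) - g(\xi)|\,d\xi < \e/2
\]
uniformly in $t$, and $|\hat g(t)| < \e/2$ for $|t|$ larger than some $T$, the triangle inequality gives $|\hat\nu(t)| < \e$ for $|t| > T$.

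There is no real obstacle here; the only point worth care is to invoke a genuine density result (for instance via mollification of truncations of $f$) so that the $\e/2$-approximation by a $C_c^1$ function is justified, after which the integration-by-parts bound does all the work.
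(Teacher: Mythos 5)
Your proof is correct and complete: the dominated convergence argument for continuity and the reduction to $C_c^1(\R)$ by $L^1$-density followed by integration by parts is the canonical proof of the Riemann--Lebesgue lemma. The paper states this lemma as a classical fact without giving any proof, so there is nothing to compare against; your argument fills that in by the standard route, and the only cosmetic remark is that the conclusion in the statement should read $\hat\nu(t)\rightarrow 0$ (as you correctly interpreted it) rather than $\nu(t)\rightarrow 0$.
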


Though rarely stated in this form, the following lemma underlies the well-known fact that the Fourier transform maps the Schwartz space into itself.

\begin{lemma}\label{schwartz}
If the measure $\n$ has a density in the class $C^k$, then $\hat\n (t)=o(|t|^{-k})$.
Conversely, if $\hat\n (t)=O(|t|^{-k})$, then the measure $\n$ has a density in the class $C^{k-2}$.
\end{lemma}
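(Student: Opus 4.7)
The lemma is the standard bridge between smoothness of a measure and decay of its Fourier transform, and I would prove the two implications by the textbook arguments: integration by parts in one direction and Fourier inversion with differentiation under the integral in the other.

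For the direct implication, assume $\n$ has density $f \in C^k$ (with $f$ and its derivatives up to order $k$ integrable and vanishing at infinity, which is the natural regularity hypothesis in this setting). Then
\begin{equation*}
\hat\n(t) = \int_{\R} e^{it\xi} f(\xi)\, d\xi,
\end{equation*}
and a single integration by parts relative to $\xi$ produces the identity $\hat\n(t) = -(it)^{-1} \int_{\R} e^{it\xi} f'(\xi)\, d\xi$, the boundary terms being killed by the decay of $f$. Iterating $k$ times yields $\hat\n(t) = (-it)^{-k}\,\widehat{f^{(k)}}(t)$. Since $f^{(k)} \in L^1(\R)$, the Riemann--Lebesgue Lemma \ref{RL} forces $\widehat{f^{(k)}}(t) \to 0$ as $|t| \to \infty$, hence $\hat\n(t) = o(|t|^{-k})$.

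For the converse, assume $\hat\n(t) = O(|t|^{-k})$. Combined with the trivial bound $|\hat\n(t)| \le 1$, this gives $|t|^j\,|\hat\n(t)| \in L^1(\R)$ for every integer $j$ with $j \le k-2$, because near infinity the integrand is dominated by $C|t|^{j-k}$ with $j-k \le -2$, while near the origin it is bounded by $|t|^j$. In particular $\hat\n$ itself is integrable (take $j=0$), so the Fourier inversion formula defines a continuous bounded function
\begin{equation*}
f(\xi) = \frac{1}{2\pi} \int_{\R} e^{-it\xi}\, \hat\n(t)\, dt
\end{equation*}
which, by uniqueness of Fourier transforms of finite measures, is the density of $\n$. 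Because $(-it)^j \hat\n(t) \in L^1(\R)$ for $j \le k-2$, dominated convergence legitimizes differentiating under the integral $j$ times to obtain
\begin{equation*}
f^{(j)}(\xi) = \frac{1}{2\pi} \int_{\R} (-it)^j e^{-it\xi}\, \hat\n(t)\, dt,
\end{equation*}
and the same dominated convergence argument shows that $f^{(j)}$ is continuous in $\xi$. Thus $f \in C^{k-2}$.

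The only genuinely delicate point is book-keeping on the decay assumptions for $f^{(j)}$ at infinity in the direct part, which I would handle by stating the hypothesis as $f \in C^k$ with $f^{(j)} \in L^1(\R)$ for $0 \le j \le k$ (automatic in the applications here, where the densities arise from compactly supported or rapidly decaying constructions); the converse direction is entirely routine once the loss of two derivatives coming from the $L^1$-threshold $j \le k-2$ is made explicit.
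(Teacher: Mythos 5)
Your argument is correct and takes essentially the same route as the paper, which only sketches this standard fact (integration by parts for the decay of $\hat\n$, then Fourier inversion with differentiation under the integral for the converse) and defers the details to Donoghue. Your explicit flagging of the extra integrability hypothesis $f^{(j)}\in L^{1}(\R)$ needed for the direct implication is a legitimate point that the paper's informal statement glosses over, and it is harmless in the applications here.
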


Roughly, the first assertion holds because differentiating $k$ times a function multiplies its Fourier transform by $t^k$. The second hypothesis ensures that $t^{k-2}\hat\n(t)$ is integrable, hence has continuous Fourier transform tending to zero at infinity. This is also the case of the $(k-2)$nd derivative of the density of $\n$ because the Fourier transform is essentially an involution. We refer to Section II-29 in \cite{D} for detailed statements.

\begin{proof}[Proof of Theorem \ref{Ck}]
Following \cite{E2}, we consider the Fourier transform $\hat \n (t)=\int_\R e^{i t \x}d\n(\x)$ of the harmonic measure $\n$. Using decomposition (\ref{decomposition}), it is given by
\begin{eqnarray}\label{dechat}\hat \n (t) =\int_{\O_0} \widehat{\n_\z}(t)d\P_0(\z). \end{eqnarray}

The Fourier transform of the measure $\n_\z$ conditionned by $\z=\{S_j\}_{j=0}^{\infty}$, described above in (\ref{zetaconvolution}) as a convolution, is computed as a product, using the Fourier transform $\cos(ta)$ of the measure $\frac{1}{2}(\d_a+\d_{-a})$. We get
\begin{eqnarray}\label{fourierdec}
\widehat{\n_\z} (t) = \prod_{k=-\infty}^\infty  \cos(t e^{-\g k})^{n(\z,k)},
\end{eqnarray}
where the last line is obtained by setting $n(\z,k)=\#\{j \in \N| S_{j-1}=k\}$, which satisfes $n(\z,k) \geq 1$ for $k\geq 0$ and $\P_0$-a.e. $\z$ (see Section \ref{probafacts}).

On the other hand, the Fourier transform of the Bernoulli convolution of parameter $\l=e^{-\g}$ is given by $\widehat{b_{e^{-\g}}}(t)= \prod_{k=0}^\infty  \cos(t e^{-\g k})$. This shows that $|\widehat{\n_\z} (t)| \leq |\widehat{b_{e^{-\g}}}(t)|$ for $\P_0$-a.e. sample path $\z$, so $|\hat{\n}(t)| \leq |\widehat{b_{e^{-\g}}}(t)|$.

Now take $k \in \N$ and let $\l_{k+2}$ and $E \subset [\l_{k+2},1]$ of full measure be given by Theorem \ref{erdosCk} such that $b_\l$ has a density in the class $C^{k+2}$ for all $\l$ in $E$. 

Finally set $\g_k=-\log \l_{k+2}$. If $\g$ belongs to the set $-\log E \subset ]0,\g_k[$ of full measure, then by Lemma \ref{schwartz}, the Fourier transform $\widehat{b_{e^{-\g}}}$ is $o(|t|^{-k-2})$, as well as $\hat \n(t)$, so $\n$ admits a density in the class $C^k$.
\end{proof}

\begin{remark} Stated in these forms, Theorem \ref{thmsolomyak} and Theorem \ref{Ck} do not provide explicit description of random walks $\m$ with regular harmonic measure, but only existence for almost all values of parameters. Explicit measures can be obtained by the same proof as above, using the following:

\begin{theorem}[Wintner 1935 \cite{W}]
The Bernoulli convolution $b_\l$ with parameter $\l=(\frac{1}{2})^\frac{1}{m}$ admits a density function in the class $C^{m-2}$.
\end{theorem}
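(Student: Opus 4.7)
My plan is to adapt the Fourier-analytic strategy from the proof of Theorem \ref{Ck}: compute $\widehat{b_\l}$ in closed form, estimate its decay at infinity, and invoke Lemma \ref{schwartz} to transfer that decay into regularity of the density. The decisive input is the arithmetic relation $\l^m = 1/2$, which will factor $\widehat{b_\l}$ into $m$ copies of the Fourier transform of a uniform measure on an interval.

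Concretely, the first step is to record $\widehat{b_\l}(t) = \prod_{j=0}^\infty \cos(t\l^j)$, since $\frac{1}{2}(\d_a+\d_{-a})$ has Fourier transform $\cos(ta)$. The second step uses $\l^m = 1/2$ to partition the index set: writing $j=mq+r$ with $0 \le r < m$ gives $\l^j = \l^r\cdot 2^{-q}$, so the product regroups as
$$\widehat{b_\l}(t) = \prod_{r=0}^{m-1}\prod_{q=0}^\infty \cos(t\l^r\cdot 2^{-q}).$$
The third step evaluates each inner product by the classical telescoping identity $\prod_{q=0}^\infty \cos(s\cdot 2^{-q}) = \sin(2s)/(2s)$, which follows from iterating $\cos\th = \sin(2\th)/(2\sin\th)$. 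Substituting $s = t\l^r$ yields
$$\widehat{b_\l}(t) = \prod_{r=0}^{m-1}\frac{\sin(2t\l^r)}{2t\l^r},$$
and since each factor is bounded in absolute value by $1/(2|t|\l^r)$, the whole product is $O(|t|^{-m})$ as $|t|\to\infty$.

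The final step feeds this decay into the converse part of Lemma \ref{schwartz}, which immediately produces a density of class $C^{m-2}$. I do not foresee a substantive obstacle here: the absolute convergence $\prod|\cos|\le 1$ legitimates the rearrangement by residue class, and the telescoping evaluation is standard. The only loss of sharpness comes from Lemma \ref{schwartz} itself, which trades two derivatives for the rate $O(|t|^{-k})$, giving $C^{m-2}$ rather than the $C^{m-1}$ one would expect by observing that $b_\l$ has become an $m$-fold convolution of rescaled uniform distributions on an interval.
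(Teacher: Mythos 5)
Your proof is correct. Note that the paper itself gives no proof of this statement --- it is quoted as a classical result of Wintner with only a citation --- so your argument supplies a derivation the paper omits, and it does so entirely within the paper's own Fourier-analytic framework (the formula $\widehat{b_\l}(t)=\prod_{j\ge 0}\cos(t\l^j)$ already appears in the proof of Theorem \ref{Ck}, and Lemma \ref{schwartz} is exactly the transfer device the authors set up). Each step checks out: the regrouping by residue classes $j=mq+r$ needs no delicate rearrangement argument, since the partial product over $j\le mN-1$ is literally $\prod_{r=0}^{m-1}\prod_{q=0}^{N-1}\cos(t\l^r 2^{-q})$ and each inner factor converges; the Vi\`ete-type identity $\prod_{q\ge 0}\cos(s2^{-q})=\sin(2s)/(2s)$ is correct; and $O(|t|^{-m})$ decay plus Lemma \ref{schwartz} gives $C^{m-2}$. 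One small correction to your closing remark: the exponent $m-2$ is not an artifact of Lemma \ref{schwartz}. As your factorization shows, $b_\l$ is the $m$-fold convolution of uniform distributions on the intervals $[-2\l^r,2\l^r]$, hence a piecewise polynomial (spline) of degree $m-1$, which is exactly $C^{m-2}$ and generically fails to be $C^{m-1}$ at the knots --- already for $m=2$ the density is a trapezoid, continuous but not differentiable. So the conclusion is in fact sharp.
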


In particular, a measure satisfying the hypothesis of Theorem \ref{thmsolomyak} with $\g=\frac{\log 2}{k+4}$ admits a density function of class $C^k$.
\end{remark}

\section{The case of ``Pisot" vertical lattices}\label{Pisot}

Inspired by Erd\"os Theorem \ref{erdos1}, we focus on the case where the vertical measure $\m_z$ has support in a lattice $\g\Z$ where $e^{\g}$ is a Pisot number. It permits to construct random walks on $Sol$ with arbitrarily small speed, but harmonic measures which are singular with respect to Lebesgue measure. This hypothesis, which may seem odd at first sight, is necessarily satisfied if the support of $\m$ generates a lattice in $Sol$.

\subsection{Singular harmonic measures with small speed} We prove singularity of the harmonic measure in the case where $\m$ gives independence to $\m_z$ and $\m_x$, and both these measures are supported on a lattice. The following theorem generalizes Erd\"os Theorem \ref{erdos1}, which corresponds to the (degenerate) case $\m_z=\d_{\g}$ below.

\begin{theorem}\label{thmerdos}
Assume that $\m$ has finite first moment and that $\pi_\ast \m=\m_z \times \m_x$ is a product measure between 
\begin{enumerate}
\item $\m_z$ such that $\E\m_z=\a>0$ and with support $Supp(\m_z) \subset \g \Z$ for a real number $\g$ such that $e^\g$ is a Pisot number, 
\item and $\m_x=q_1 \d_1+q_1\d_{-1}+q_0\d_0$ with $q_0>\frac{1}{2}$.
\end{enumerate}
Then the harmonic measure $\n$ on $\R$ corresponding to the pair $(Sol, \m)$ is singular with respect to Lebesgue measure.
\end{theorem}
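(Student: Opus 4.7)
My plan is to mimic Erdős' original proof of Theorem \ref{erdos1}, showing that $\hat\n(t)$ does not tend to $0$ along the sequence $t_N=2\pi\theta^N$ with $\theta=e^\gamma$. By the Riemann-Lebesgue Lemma \ref{RL}, this will forbid $\n$ from being absolutely continuous, and the pure-type dichotomy in Proposition \ref{unique-harm} will then force complete singularity.

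The starting point is to reuse the decomposition $\n=\int_{\O_0}\n_\z\,d\P_0(\z)$ of (\ref{decomposition}) from the proof of Theorem \ref{thmsolomyak}, which is still valid because $\pi_\ast\m=\m_z\times\m_x$ and $\m_z$ is supported on $\g\Z$. The Fourier transform of $\m_x=q_0\d_0+q_1(\d_1+\d_{-1})$ at $s\in\R$ equals $q_0+2q_1\cos(s)$, and the hypothesis $q_0>\tfrac{1}{2}$ ensures this is bounded below by $2q_0-1>0$ everywhere. Grouping by occupation times $n(\z,k)=\#\{j\in\N : S_{j-1}=k\}$ of the integer-valued random walk $\z=(S_j)$ (normalized by $\g$), the analogue of (\ref{fourierdec}) reads
$$
\widehat{\n_\z}(t)=\prod_{k\in\Z}\bigl(q_0+2q_1\cos(t e^{-\g k})\bigr)^{n(\z,k)},
$$
which is a positive quantity. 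Substituting $t=t_N=2\pi\theta^N$ turns the argument of each cosine into $2\pi\theta^{N-k}$.

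Now the Pisot hypothesis enters via the classical estimate $\|\theta^m\|\le C\rho^m$ for all $m\ge 0$, where $\|\cdot\|$ denotes distance to the nearest integer and $\rho<1$ is the maximum modulus of the Galois conjugates of $\theta$ (this is the content of Appendix \ref{appendpisot}). Together with the elementary bound $1-\cos(2\pi x)\le 2\pi^{2}\|x\|^{2}$ and the inequality $-\log(1-u)\le 2u$ for small $u>0$, one obtains, for $|N-k|$ sufficiently large,
$$
-\log\bigl(q_0+2q_1\cos(2\pi\theta^{N-k})\bigr)\le C_1\b^{2|N-k|},
$$
with $\b=\max(\rho,\theta^{-1})<1$; for the finitely many remaining indices the left-hand side is bounded by $-\log(2q_0-1)$, a universal constant. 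Taking $\P_0$-expectation and using that for a random walk on $\Z$ with positive mean $\a/\g$ the expected occupation times $\E_{\P_0}[n(\z,k)]$ are uniformly bounded (this is a standard renewal-theoretic fact, cited from Appendix \ref{probafacts}), I obtain
$$
\E_{\P_0}\bigl[-\log\widehat{\n_\z}(2\pi\theta^N)\bigr]\le C_0
$$
with $C_0$ independent of $N$. Markov's inequality then gives $\widehat{\n_\z}(2\pi\theta^N)\ge e^{-2C_0}$ on a set of $\P_0$-measure at least $\tfrac{1}{2}$, so $\hat\n(2\pi\theta^N)=\E_{\P_0}[\widehat{\n_\z}(2\pi\theta^N)]\ge \tfrac{1}{2}e^{-2C_0}$ for every $N$. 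Thus $\hat\n(t)\not\to 0$, completing the argument.

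The two technical points that need care are: (i) checking that the Pisot-based geometric bound on $\|\theta^m\|$ combines cleanly with the uniform lower bound $2q_0-1$ across all $m$ (positive, negative, and small), so that the per-site contribution is summable in $m$; and (ii) justifying the uniform upper bound on $\E_{\P_0}[n(\z,k)]$, where I expect the appendix material on random walks on $\Z$ with positive drift to apply directly (if $\m_z/\g$ lives in a sublattice $d\Z$ of $\Z$ this only changes constants, since the walk is confined to that sublattice and one works with $N$ in the same sublattice). These are routine verifications; the genuinely substantive input is the Pisot decay $\|\theta^n\|\to 0$ geometrically, which is precisely the feature that singles out Pisot parameters in Erdős' original theorem.
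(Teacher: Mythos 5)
Your proposal is correct and follows essentially the same route as the paper: the decomposition $\n=\int_{\O_0}\n_\z\,d\P_0(\z)$, the product formula for $\widehat{\n_\z}$, positivity of each factor from $q_0>\tfrac12$, the uniform bound $\E_{\P_0}[n(\z,k)]\le M$ from the appendix, and the Pisot decay of $\|\theta^m\|$ evaluated along $t_N=2\pi e^{\g N}$. The only (harmless) difference is at the last step, where the paper applies Jensen's inequality to get $\hat\n(t_N)\ge e^{-C_0}$ directly, while you use Markov's inequality to get the slightly weaker but equally sufficient bound $\tfrac12 e^{-2C_0}$.
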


Theorem \ref{main} (3) follows from Theorem \ref{thmerdos}, Proposition \ref{unique-harm} and \ref{non-atomic}, since $\m$ can be chosen to be a finitely supported, non-degenerate measure such that the group generated by $Supp(\m)$ acts non-elementary on $\R$, keeping conditions (1) and (2) in Theorem \ref{thmerdos}.

\begin{proof}[Proof of Theorem \ref{thmerdos}]
By Riemann-Lebesgue Lemma \ref{RL}, it is sufficient to prove that $\hat \n(t)$ does not tend to $0$ as $t$ tends to $\infty$. To ease notations, we write $\b=e^{-\g}$ and note that $\b^{-1}$ is a Pisot number.

As in the previous proof, we use (\ref{decomposition})-(\ref{fourierdec}) to compute the Fourier transform. The law of $x_j\b^k$ is $q_1(\d_{\b^k} + \d_{-\b^k})+q_0$, with Fourier transform $2q_1\cos(t\b^k)+q_0$, so (\ref{fourierdec}) becomes
$$\widehat{\n_\z} (t)=\prod_{k=-\infty}^\infty (2q_{1} \cos(t \b^k)+q_{0})^{n(\z,k)},$$
with the notation of Section \ref{probafacts}. All the terms in the product are $\leq 1$ and greater than $-2q_{1}+q_{0}>0$ because $q_0>\frac{1}{2}$ and $2q_1+q_0=1$. By (\ref{dechat}), we get:
\begin{eqnarray}\label{nuhat}\hat \n (t) = \int_{\O_0} \prod_{k=-\infty}^\infty (2q_{1} \cos(t \b^k)+q_{0})^{n(\z,k)} d\P_0(\z).\end{eqnarray}

By the Jensen inequality and the Fubini theorem,
\begin{eqnarray*}
\log \hat \n (t) &\geq &  \int_{\O_0} \log \left( \prod_{k=-\infty}^\infty (2q_1 \cos(t\b^k)+q_0)^{n(\z,k)} \right) d\P_0(\z) \\
 &=& \int_{\O_0} \sum_{k=-\infty}^\infty n(\z,k) \log (2q_1 \cos(t\b^k)+q_0) d\P_0(\z) \\
 &=& \sum_{k=-\infty}^\infty \int_{\O_0} n(\z,k) d\P_0(\z) \log (2q_1 \cos(t\b^k)+q_0).
\end{eqnarray*}
Lemma \ref{lemmaproba} gives $0< \int_{\O_0}n(\z,k) d\P_0(\z)=\E n(\z,k) \leq M <\infty$. As $\log(2q_1\cos(t\b^k)+q_0)<0$, we get:
\begin{eqnarray*}
\log \hat \n (t) \geq  M \sum_{k=-\infty}^\infty  \log (2q_1 \cos(t\b^k)+q_0) = M \log \left( \prod_{k=-\infty}^\infty (2q_1\cos(t\b^k)+q_0) \right),
\end{eqnarray*}
thus:
\begin{eqnarray}\label{erdos} \hat \n(t)^\frac{1}{M} \geq \prod_{k=-\infty}^\infty (2q_1 \cos(t\b^k)+q_0). \end{eqnarray}
The right-hand side is almost the Fourier transform of the Bernoulli convolution with parameter the inverse of a Pisot number, so the remainder of our proof follows Erd\"os \cite{E}. For any integer $l$,  set $t_l=2\pi \b^l$. We prove that there exists $c>0$ such that $\hat \n(t_l)^\frac{1}{M} \geq c$ for all $l$ in $\Z$.

By Lemma \ref{lemmapisot}, there exists $\th<1$ and $L$ such that: %$\cos(2\pi \b^k) \geq 1-\th^{|k|}$ for all $k \geq |L|$
\begin{eqnarray*}
\hat \n(t_l)^\frac{1}{M} &\geq & \prod_{k=-\infty}^\infty (2q_1 \cos(2\pi \b^{l+k})+q_0) \\
& \geq & \prod_{|k|\geq L}(2q_1(1-\th^{|k|})+q_0) \prod_{|k|<L} (2q_1\cos(2\pi \b^k)+q_0)=c>0,
\end{eqnarray*}
where $l$ disappears by translation invariance. The first product $\prod_{|k|\geq L}(1-2q_1\th^{|k|})>0$ is non-zero by exponential decay and the second has finitely many positive factors.
\end{proof}

\begin{remark}
Theorem \ref{thmerdos} is still true for $\m_x$ a symmetric measure on $\Z$ where $q_0 >\frac{1}{2}$ and the sequence $\m_x(r)=\m_x(-r)=q_r$ has a finite $\eta$-moment for some $\eta>1$. 

More precisely, let $\b^{-1}$ be a Pisot number and consider $\th<1$ from Lemma \ref{lemmapisot}. Under the moment condition, there exists $1<\s<\th^{-1}$ such that 
$\sum_{k=1}^\infty \sum_{r \geq \s^k} q_r <\infty$. We deduce that the harmonic measure $\n$ is singular. Indeed, (\ref{erdos}) becomes:
$$\hat \n(t_l)^\frac{1}{M} \geq \prod_{k=-\infty}^\infty (q_0+\sum_{r=1}^\infty 2q_r \cos(2\pi r \b^k) ).$$
By Lemma \ref{lemmapisot}, there exists $L'$ such that for $|k| \geq L'$ and $1 \leq r \leq \s^{|k|}$, we have $|\cos(2\pi \b^kr)-1| \leq r\th^{|k|} \leq (\s\th)^{|k|}$. Then:
\begin{eqnarray*}
\hat \n(t_l)^\frac{1}{M} &\geq & \prod_{|k|\geq L'} (q_0+\sum_{r=1}^{\s^{|k|}}2q_r (1-(\s\th)^{|k|})-\sum_{r>\s^{|k|}}2q_r ) \prod_{|k|< L'} (q_0+\sum_{r=1}^\infty 2q_r \cos(2\pi r \b^k) ) \\
& \geq & \prod_{|k|\geq L'}(1-2(\s\th)^{|k|}-4\sum_{r>\s^{|k|}}q_r) \prod_{|k|< L'} (q_0+\sum_{r=1}^\infty 2q_r \cos(2\pi r \b^k) )=c>0.
\end{eqnarray*}
The left-side product converges by the assumption on the decay of $(q_r)_{r}$ and because $\s\th<1$.
\end{remark}

\subsection{Cocompact lattices}\label{coc}

For any matrix $T $ in $SL(2, \Z)$ with trace satisfying $Tr(T)>2$, denote $\G_T$ the semi-direct product $\Z\ltimes_{T} \Z^{2}$, where $r \in \Z$ acts on $(p,q) \in \Z^2$ by $r.(p,q)=T^r(p,q)$. The abstract group $\G_T$ can be realized as a cocompact lattice in $Sol$. 

Indeed, let $0<e^{-\g}<1$ and $e^\g$ be the the eigenvalues of $T$. Note that they are the roots of $X^2-Tr(T)X+1$, so $e^\g$ is a Pisot number. By change of basis $B$, we diagonalize $T$ as
$$
BTB^{-1}=
\left(\begin{array}{cc}
e^{-\g} & 0 \\ 0 & e^\g
\end{array}\right).
$$
The homomorphism $\p:\G_T \rightarrow Sol$ given by $\p(r,p,q)=(r\g,B(p,q))$ is injective. Its image $\p(\G_T)$ is a cocompact lattice since the quotient space is a torus fiber bundle over the circle. 

In fact, any cocompact lattice in $Sol$ has this form by \cite{MR} or \cite{MS}. Moreover, any finitely generated group quasi-isometric to $Sol$ is virtually (up to taking finite index subgroup) a cocompact lattice in $Sol$ by \cite{EFW}.

A random walk on the finitely generated group $\G_T$ can be viewed in $Sol$ via the above homomorphism $\p$. In this case, the visual boundary $\partial Sol$ with the harmonic measure $\n$ from Theorem \ref{boundary} is actually the Poisson boundary of $\G_T$ by Kaimanovich (\cite{Ksolv},\cite{Khyperbolic}). By Corollary \ref{fingen}, there exists a non-degenerate measure $\m$ on $\G_T$ such that the harmonic measure $\n$ on the boundary is singular with respect to Lebesgue measure. However, we have not answered the following:
\begin{question}
Does there exist a random walk $\m$ (with finite support) on a cocompact lattice of $Sol$ such that the harmonic measure $\n$ on the boundary is absolutely continuous with respect to Lebesgue ?
\end{question}
Neither Theorem \ref{thmsolomyak} nor its extension in Remark \ref{remPS2} applies to this question because the set of Pisot numbers has zero Lebesgue measure. Theorem \ref{thmerdos} either, since for a non-degenerate random walk on $\p(\G_T)$, the projected measure $\m_x$ generates a dense subgroup of $\R$, rather than a lattice.

\section{Appendix: Classical Facts}\label{App}

\subsection{About Pisot numbers} \label{appendpisot} A real number $\a>1$ is a Pisot number if there exists a polynomial $P(X)=X^r+a_{r-1}X^{r-1}+\dots+a_0$ with integer coefficients and roots $\{\a,\a_2,\dots ,\a_r\}$ satisfying $|\a_s|<1$ for all $2 \leq s \leq r$. For instance, the Golden ratio is a Pisot number, root of $X^2-X-1$. These numbers are interesting because their powers are very close to being integers.

\begin{fact}\label{fact}
If $\a$ is a Pisot number, there exists $\tilde \th<1$ such that $dist(\a^k,\Z) \leq \tilde{\th}^k$ for all $k \in \N$.
\end{fact}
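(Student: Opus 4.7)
The plan is to exploit the fact that $\a$ is an algebraic integer (root of a monic integer polynomial) whose conjugates all lie strictly inside the unit disk, so that the Newton power sum $\a^k+\a_2^k+\cdots+\a_r^k$ is an integer while the contributions from the conjugates decay exponentially.

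First I would recall that for the monic integer polynomial $P(X)=X^r+a_{r-1}X^{r-1}+\cdots+a_0$ having roots $\a_1=\a,\a_2,\dots,\a_r$, the elementary symmetric functions $e_j(\a_1,\dots,\a_r)=(-1)^j a_{r-j}$ are integers. By Newton's identities, the power sum $T_k:=\sum_{s=1}^{r}\a_s^{k}$ is a polynomial with integer coefficients in $e_1,\dots,e_r$, and is therefore an integer for every $k\in\N$. (Alternatively, one can observe that $T_k$ satisfies the linear recurrence $T_k=-a_{r-1}T_{k-1}-\cdots-a_0T_{k-r}$ with integer initial values $T_0=r,T_1=-a_{r-1},\dots$.)

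Next I would write $\a^k=T_k-\sum_{s=2}^{r}\a_s^{k}$ and use $T_k\in\Z$ together with the assumption $|\a_s|<1$ for $s\ge 2$ to get
\begin{equation*}
\mathrm{dist}(\a^k,\Z)\;\le\;\Bigl|\sum_{s=2}^{r}\a_s^{k}\Bigr|\;\le\;(r-1)\,\theta^{k},\qquad \theta:=\max_{2\le s\le r}|\a_s|<1.
\end{equation*}
Finally, I would pick any $\tilde\theta\in(\theta,1)$, observe that $(r-1)\theta^{k}/\tilde\theta^{k}=(r-1)(\theta/\tilde\theta)^{k}\to 0$, and choose $k_{0}$ so that $(r-1)\theta^{k}\le\tilde\theta^{k}$ for all $k\ge k_{0}$. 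The only point requiring a mild adjustment is the finitely many small values $k<k_{0}$: for these one uses the trivial bound $\mathrm{dist}(\a^{k},\Z)\le 1/2$ and, if necessary, enlarges $\tilde\theta$ so that $\tilde\theta^{k}\ge 1/2$ for $1\le k<k_{0}$ (still keeping $\tilde\theta<1$, since only finitely many inequalities are imposed). This yields the desired $\tilde\theta<1$ valid for all $k\in\N$.

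The only genuinely nontrivial step is the integrality of $T_k$; once that is in hand the rest is an exponential-decay estimate plus bookkeeping. I do not expect any real obstacle, and the argument is essentially the classical observation of Pisot underlying his characterization of these numbers.
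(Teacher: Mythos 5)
Your proof is correct and follows essentially the same route as the paper: integrality of the power sums $\sum_s \a_s^k$ (the paper invokes symmetric functions, you invoke Newton's identities — the same fact) plus exponential decay of the conjugate terms. Your final bookkeeping step, absorbing the constant $(r-1)$ into a slightly larger $\tilde\th<1$, is a point the paper's proof glosses over, so your version is if anything a bit more careful.
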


\begin{proof}
For each $k$, the quantity $\a^k+\a_2^k+\dots+\a_r^k$ is a symmetric polynomial in the roots of $P$, which can be expressed as a polynomial expression of the coefficients of $P$, hence is an integer. This shows $dist(\a^k,\Z) \leq \a_2^k+\dots+\a_r^k \leq (r-1)\d^k$ where $\d=\max_{s=2,\dots,r}|\a_s|<1$.
\end{proof}

This Fact \ref{fact} will be more handy to us in the following form.

\begin{lemma}\label{lemmapisot}
For any parameter $0<\b<1$ such that $\frac{1}{\b}$ is Pisot, there exists $\th<1$ and $L$ such that $|k|\geq L$ implies $|\cos(2\pi \b^k)-1|\leq \th^{|k|}$. 
\end{lemma}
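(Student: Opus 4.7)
The plan is to split by sign of $k$ and reduce each side to a Taylor estimate on $\cos$ near a multiple of $2\pi$.

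For $k \geq 0$, the quantity $\beta^{k}$ tends to $0$ geometrically, so the Taylor bound $|\cos(u)-1| \leq u^{2}/2$ gives directly $|\cos(2\pi\beta^{k}) - 1| \leq 2\pi^{2}\beta^{2k}$. Since $\beta < 1$, this is already exponentially small in $k$.

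For $k < 0$, write $k = -m$ with $m > 0$, so $\beta^{k} = \alpha^{m}$ with $\alpha = 1/\beta$ a Pisot number. By Fact~\ref{fact} there exists $\tilde\theta < 1$ and, for each $m$, an integer $n_{m}$ with $|\alpha^{m} - n_{m}| \leq \tilde\theta^{m}$. Since $\cos$ is $2\pi$-periodic,
\[
\cos(2\pi\alpha^{m}) \;=\; \cos\!\bigl(2\pi(\alpha^{m}-n_{m})\bigr),
\]
and applying the same Taylor estimate yields $|\cos(2\pi\alpha^{m}) - 1| \leq 2\pi^{2}\tilde\theta^{2m}$.

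Finally, set $\theta_{0} = \max(\beta^{2},\tilde\theta^{2}) < 1$ and pick any $\theta \in (\theta_{0}, 1)$. The combined bound $|\cos(2\pi\beta^{k})-1| \leq 2\pi^{2}\theta_{0}^{|k|}$ holds for every $k \in \mathbb{Z}$, and choosing $L$ large enough so that $2\pi^{2}(\theta_{0}/\theta)^{L} \leq 1$ absorbs the constant $2\pi^{2}$ into the exponential. There is no real obstacle; the only ``content'' is invoking the Pisot property (Fact~\ref{fact}) on the negative side, and the positive side is immediate from $\beta < 1$.
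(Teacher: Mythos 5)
Your proof is correct and follows essentially the same route as the paper's: split by the sign of $k$, use $\b<1$ directly on the nonnegative side, and invoke Fact~\ref{fact} together with the $2\pi$-periodicity of $\cos$ on the negative side. The only (immaterial) difference is that you use the quadratic bound $|\cos u-1|\le u^{2}/2$ where the paper uses a linear one, and you spell out the absorption of the constant into the exponential via the choice of $L$, which the paper leaves implicit.
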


\begin{proof}
For $k \geq 0$ large enough, $|\cos(2\pi\b^k)-1|\leq 2\pi \b^k$ as $\b<1$. On the other hand:
$$dist(2\pi \b^{-k},2\pi \Z)=2\pi dist(\left(\frac{1}{\b}\right)^k,\Z) \leq 2\pi \tilde{\th}^k $$ 
by Fact \ref{fact}, hence $|\cos(2\pi\b^{-k})-1|\leq 2\pi \tilde{\th}^k$. Take $\th>\max\{\b,\tilde \th\}$.
\end{proof}

\subsection{About random walks on the integers}\label{probafacts} Let $\m_z$ be a probability measure on $\Z$ of mean $\a=\E \m_z$, and consider the associated random walk $S_j=z_1+\dots+z_j$, where $z_i$ are independent $\m_z$-distributed integers. We denote $\P_0$ the inherited measure on the space $\O_0$ of paths in $\Z$ emanating from zero. 

For a sample path $\z=\{S_j\}_{j=0}^\infty$ and an integer $k$, denote $n(\z,k)=\#\{j \in \N|S_j=k\}$ the amount of time spent in position $k$ by the random walk. For each fixed $k$, the function $n(.,k):\O_0 \rightarrow \N$ is measurable. 

By the law of large numbers, $S_j \sim \a j$ almost surely. Therefore if $\a \neq 0$ for each integer $k$ and for $\P_0$ almost every path $\z$, we have $n(\z,k)<\infty$. Moreover, if $\a>0$ (respectively $\a<0$) there almost surely exists an integer $k_0$, depending on the sample $\z$ such that $n(\z,k)=0$ for all $k\leq k_0$ (resp. $k \geq k_0$).

The average time spent in position $k$ is estimated in the following lemma.

\begin{lemma}\label{lemmaproba}
If $\E\m_z=\a\neq 0$, then $\E n(\z,0)=M<\infty$. Moreover, if $\a>0$ (resp. $\a<0$), we have $\E n(\z,k)=M$ for all $k \geq 0$ (resp. $k \leq 0$) and $\E n(\z,k)\leq M$ for all $k \leq 0$ (resp. $k \geq 0$).
\end{lemma}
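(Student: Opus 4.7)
The plan is to exploit the strong Markov property together with the transience implied by $\a\neq 0$. Set $T:=\inf\{j\geq 1 : S_j=0\}$ and $p:=\P_0(T<\infty)$. The strong law of large numbers gives $S_j/j\to\a$ almost surely, so $|S_j|\to\infty$ and the set $\{j\geq 0 : S_j=0\}$ is $\P_0$-a.s.\ finite. Iterating the strong Markov property at the successive returns to $0$ shows that the number of returns after time $0$ is a geometric random variable of parameter $1-p$; so $p=1$ would produce infinitely many returns with probability one, contradicting the previous observation. Hence $p<1$ and
\[
M \;:=\; \E n(\z,0) \;=\; 1 + \sum_{\ell=1}^{\infty} p^{\ell} \;=\; \frac{1}{1-p} \;<\; \infty.
\]

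For arbitrary $k\in\Z$, set $\tau_k:=\inf\{j\geq 0 : S_j=k\}$. On $\{\tau_k=\infty\}$ the variable $n(\z,k)$ vanishes; on $\{\tau_k<\infty\}$, the strong Markov property at $\tau_k$ asserts that $(S_{\tau_k+j}-k)_{j\geq 0}$ is independent of $\cF_{\tau_k}$ and distributed as the walk starting at $0$, so the number of visits to $k$ from time $\tau_k$ onwards has the same law as $n(\z,0)$ under $\P_0$. Taking expectations gives the identity
\[
\E n(\z,k) \;=\; \P_0(\tau_k<\infty)\cdot M \;\leq\; M,
\]
which is the uniform upper bound and is, in fact, all that is used in the proof of Theorem~\ref{thmerdos}.

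For the equality $\E n(\z,k)=M$ when $\a>0$ and $k\geq 0$ (the case $\a<0$, $k\leq 0$ is symmetric), it remains to show $\P_0(\tau_k<\infty)=1$ for every accessible $k$. Since $\a>0$ one has $S_j\to+\infty$ a.s., so the sequence of strict ascending ladder heights $H_1<H_2<\cdots$ is $\P_0$-a.s.\ an infinite renewal process on $\Z$; the classical renewal theorem together with a Markov-chain irreducibility argument on the sublattice generated by $\mathrm{Supp}(\m_z)$ yields $\P_0(\tau_k<\infty)=1$ for every $k\geq 0$ in that sublattice. The main obstacle is precisely this equality claim: the statement is clean only once $k$ is restricted to the sublattice generated by $\mathrm{Supp}(\m_z)$ (for $k$ outside this sublattice one trivially has $n(\z,k)=0$), and its proof requires some form of the renewal theorem rather than the bare strong Markov argument that suffices for the upper bound.
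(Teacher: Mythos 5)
Your argument for the finiteness of $M$ and for the bound $\E n(\z,k)=\P_0(\t_k<\infty)\,M\le M$ is exactly the paper's proof: transience from the law of large numbers, the geometric distribution of the number of visits to $0$ via the strong Markov property (giving $M=\frac{1}{1-p}$), and conditioning on the first hitting time $\t_k$. So the core of the lemma, and everything that is actually used in the proof of Theorem \ref{thmerdos}, is established the same way in both texts.

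Where you diverge is the equality $\E n(\z,k)=M$ for $k\ge 0$ when $\a>0$. The paper simply asserts $\P_0[\t_k<\infty]=1$ for $k\ge 0$ without justification; you rightly sense that this needs an argument and propose to restrict $k$ to the sublattice generated by $\mathrm{Supp}(\m_z)$ and to invoke the renewal theorem for the ascending ladder heights. But that route does not deliver the claim: the renewal theorem controls the asymptotic expected number of visits, not the hitting probability, and $\P_0(\t_k<\infty)=1$ is in fact false in general even for $k$ in the generated sublattice. For example, $\m_z=\frac{1}{2}\d_1+\frac{1}{2}\d_3$ generates $\Z$ and has drift $2>0$, yet $\P_0(\t_1<\infty)=\frac{1}{2}$, so $\E n(\z,1)=\frac{1}{2}<1=M$. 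The equality does hold when the walk cannot overshoot in the direction of the drift --- e.g.\ for the two-point laws $\m_z=p\d_\g+(1-p)\d_{-\g}$, which are nearest-neighbour on $\g\Z$ and are the ones appearing in Theorems \ref{thmsolomyak} and \ref{Ck}. Since only $\E n(\z,k)\le M$ enters the proof of Theorem \ref{thmerdos}, your proof (like the paper's) is sound where it matters, but the sentence deriving $\P_0(\t_k<\infty)=1$ from the renewal theorem should either be dropped or replaced by an explicit skip-free hypothesis on $\m_z$.
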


\begin{proof}
By definition, $\E n(\z,k)=\sum_{m=1}^\infty m \P_0[n(\z,k)=m]$. Conditionning by the first hitting time $\t_k=\min \{j \geq 0 |S_j=k\}$, we get 
$$\P_0[n(\z,k)=m]=\P_0[n(\z,k)=m|\t_k<\infty]\P_0[\t_k<\infty]. $$
By strong Markov property and translation invariance, $\P_0[n(\z,k)=m|\t_k<\infty]=\P_0[n(\z,0)=m]$. We deduce $\E n(\z,k)=\P_0[\t_k<\infty]\E n(\z,0)$. For $\a>0$ (resp. $\a<0$), we have $\P_0[\t_k<\infty]=1$ for $k\geq 0$ (resp. $k \leq 0$). This proves the second part.

To get the first part, consider the first return time $\r_0=\min\{j \geq 1|S_j=0\}$. As the random walk is transient, this return time is infinite with positive probability $\P_0[\r_0=\infty]=1-p>0$ and $\P_0[\r_0<\infty]=p<1$. 

These equalities provide the case $m=1$ in the statement $\P_0[n(\z,0)=m]=(1-p)p^{m-1}$ and $\P_0[n(\z,0)>m]=p^m$, which we prove by induction, using the strong Markov property:
$$\begin{array}{l}
\P_0[n(\z,0)=m+1] =\P_0[n(\z,0)=m+1|n(\z,0)>m]\P_0[n(\z,0)>m]=(1-p)p^m, \\ 
\P_0[n(\z,0)>m+1] = \P_0[n(\z,0)>m+1|n(\z,0)>m]\P_0[n(\z,0)>m]=p^{m+1}. \end{array}
$$
In conclusion, $M=\E n(\z,0)=(1-p)\sum_{m=1}^\infty mp^{m-1}=\frac{1}{1-p}\geq 1.$
\end{proof}

\textbf{Acknowledgements.} We wish to thank Koji Yano who pointed out the connection to Bernoulli convolutions, Philippe Castillon, Vadim A. Kaimanovich, and Takefumi Kondo for valuable comments, Yuval Peres for notifying us of \cite{Sh} and helpful comments, Boris Solomyak for informing us of \cite{BPS} and \cite{Bo}, Wolfgang Woess for a discussion about boundaries of $Sol$, and for sending us the preliminary version of \cite{Wo}, as well as the anonymous referee for helpful comments.
R.T. is supported by the JSPS Grant-in-Aid for Research Activity Start-up (No. 24840002).

\textsc{\newline J\'er\'emie Brieussel \newline Universit\'e Montpellier 2 \newline Place E. Bataillon cc 051 \newline 34095 Montpellier, France} \newline
\textit{E-mail address:} jeremie.brieussel@univ-montp2.fr

\textsc{\newline Ryokichi Tanaka \newline Tohoku University \newline Aoba-ku, Aramaki, Aoba 6-3 \newline 980-8578 Sendai, Japan} \newline
\textit{E-mail address:} r-tanaka@math.tohoku.ac.jp

\end{document}